\newenvironment{tfae}{
\begin{enumerate}}{\end{enumerate}}
\newtheorem{prop}{Proposition}[section]
\newtheorem{lemma}[prop]{Lemma}
\newtheorem{theorem}[prop]{Theorem}
\newtheorem{corollary}[prop]{Corollary}
\theoremstyle{definition}
\newtheorem{definition}[prop]{Definition}
\newtheorem{example}[prop]{Example}
\newtheorem{examples}[prop]{Examples}
\newtheorem{remark}[prop]{Remark}
\renewcommand{\subsection}[1]{\addtocounter{subsection}{1}
\vspace*{2ex}\noindent\textbf{\thesubsection}\hspace{1ex}{\bf #1}}
\def\mathrmdef#1{\expandafter\def\csname#1\endcsname{{\rm#1}}}
\def\mathsfdef#1{\expandafter\def\csname#1\endcsname{{\sf#1}}}
 \def\mathbfdef#1{\expandafter\def\csname#1\endcsname{{\rm\bf#1}}}
\newcommand{\Cats}[1]{#1\text{-}\Cat}
\newcommand{\Gphs}[1]{#1\text{-}\Gph}
\newcommand{\Grps}[1]{#1\text{-}\Grp}
\newcommand{\Rels}[1]{#1\text{-}\Rel}
\newcommand{\VCat}{\Cats{V}}
\newcommand{\WCat}{\Cats{W}}
\newcommand{\VGph}{\Gphs{V}}
\newcommand{\VGrp}{\Grps{V}}
\newcommand{\WGrp}{\Grps{W}}
\newcommand{\VRel}{\Rels{V}}
\def\TT{\mathbb{T}}
\def\relto{{\longrightarrow\hspace*{-2.8ex}{\mapstochar}\hspace*{2.6ex}}}
\def\two{\mbox{\bf 2}}
\begin{document}  
\title{On the categorical behaviour of $V$-groups}  
\author{Maria Manuel Clementino}
\address{University of Coimbra, CMUC, Department of Mathematics, 3001-501 Coimbra, Portugal}\thanks{}
\email{mmc@mat.uc.pt}
\author{Andrea Montoli}
\address{Dipartimento di Matematica ``Federigo Enriques'', Universit\`{a} degli Studi di Milano, Via Saldini 50, 20133 Milano, Italy}
\email{andrea.montoli@unimi.it}
\thanks{}

\begin{abstract}
We consider compatible group structures on a $V$-category, where $V$ is a quantale, and we study the topological and algebraic properties of such groups. Examples of such structures are preordered groups, metric and ultrametric groups, probabilistic (ultra)metric groups. In particular, we show that, when $V$ is a frame, symmetric $V$-groups satisfy very strong categorical-algebraic properties, typical of the category of groups. In particular, symmetric $V$-groups form a protomodular category.
\end{abstract}
\subjclass[2010]{06F15, 18D20, 18B35, 18D15, 18G50}
\keywords{$V$-category, $V$-group, protomodular object, metric and ultrametric group}
\maketitle  

\section{Introduction}

In the paper \cite{CMFM19}, preordered groups have been studied from a categorical point of view. On one hand, using the analogies of topological nature between the categories \Ord\ of preordered sets and monotone maps and \Top\ of topological spaces and continuous maps, one can describe limits and colimits in the category \OrdGrp\ of preordered groups using the properties of the forgetful functors to \Ord\ and to the category \Grp\ of groups. On the other hand, the main difference between topological groups and preordered ones is that the former are internal groups in \Top\, while the latter aren't, since it is not required that the inversion map is monotone. For these reasons, the algebraic properties of \OrdGrp\ are not so good as the ones of topological groups (for instance, the Split Short Five Lemma does not hold in $\OrdGrp$). In order to understand better the algebraic behaviour of $\OrdGrp$, the strategy used in \cite{CMFM19} was mainly based on the well known fact that a compatible preorder on a group is completely determined by the submonoid of positive elements. Using this observation, it was shown in \cite{CMFM19} that \OrdGrp\ shares many categorical-algebraic properties with the category \Mon\ of monoids. In particular, in both categories it is possible to identify intrinsically a full subcategory, of so-called \emph{protomodular objects} \cite{MRVdL objects}, which has basically the same algebraic properties of $\Grp$. In the case of monoids, this subcategory is precisely $\Grp$, while in \OrdGrp\ it is the subcategory whose objects are the groups equipped with a compatible preorder which is symmetric, i.e. a congruence. These are precisely the internal groups in $\Ord$. \\

Another approach is possible. Indeed, it is known \cite{Law73} that preordered sets can be seen as categories enriched in the lattice $\two = \{ \bot, \top \}$, with $\bot < \top$. Following this point of view, preordered groups can be seen as those monoid objects in the monoidal category $\two$-$\Cat$ (of categories enriched in $\two$) that are groups. In \cite{Law73} several other examples of categories enriched in a \emph{quantale} (i.e. in a complete lattice equipped with a tensor product which is distributive w.r.t. arbitrary joins) are considered, allowing to describe in this way important mathematical structures, like metric spaces. \\

The aim of the present paper is to follow this alternative approach, based on $V$-categories, where $V$ is a quantale satisfying suitable properties, to study in a common framework structures like preordered groups and metric groups. We consider what we call $V$-groups, namely monoid objects in the monoidal category $\VCat$, of $V$-categories and $V$-functors, that are groups. The advantage of working in this setting is twofold. On one hand it allows to extend to a wide class of situations the results obtained for preordered groups in \cite{CMFM19}. Actually, some of these results appear even more interesting in some of the new examples. For instance, the ``good'' objects in \OrdGrp\ are those whose preorder is symmetric, and this somehow destroys the ordered structure of the object (in particular, if we restrict our attention to partially ordered groups, the only good ones are the discrete groups), while requiring symmetry for metric groups is much more classical and natural. On the other hand, the proofs we get following the $V$-categorical approach are significantly simpler than the ones we had in \cite{CMFM19} for preordered groups, using the positive cone. \\

After recalling the necessary background on $V$-categories, we obtain relevant information on limits, colimits and factorization systems in the category $\VGrp$ of $V$-groups using the properties of the forgetful functors into $\VCat$ and $\Grp$, as well as some topological properties of $V$-groups. Then we will concentrate our attention on the algebraic properties of $V$-groups, observing that they are well-behaved especially when $V$ is a frame, which means that the tensor product defined on $V$ coincides with the meet operation. Under this assumption, we show that the protomodular objects are precisely the symmetric $V$-groups, and hence the full subcategory of $\VGrp$ whose objects are the symmetric $V$-groups is protomodular. Furthermore, observing that symmetric $V$-groups are precisely the internal groups in the cartesian closed category $\VCat$, we show that the full subcategory $\VGrp_\sym$ of symmetric $V$-groups has representable actions (in the sense of \cite{BJK action representative}) and is locally algebraically cartesian closed (in the sense of \cite{Gray}). Finally, we study split extensions in $\VGrp$, showing that all the compatible $V$-category structures on the semidirect product of two $V$-groups are intermediate between the one given by the tensor and a \emph{lexicographic} one, a generalization of the lexicographic order on a product.

\section{$V$-categories} \label{Section V-categories}

Let $V$ be a \emph{commutative and unital quantale}, that is, $V$ is a complete lattice (with top element $\top$ and bottom $\bot$) equipped with a symmetric and associative tensor product $\otimes$, with unit $k$, which preserves joins, that is,
\[v\otimes \bigvee_i u_i=\bigvee_i(v\otimes u_i),\mbox{  and  } v\otimes \bot=\bot\]
for every $v\in V$, and family $(u_i)_{i\in I}$ in $V$.
Therefore it has a right adjoint, $\hom$; that is, for all $u\in V$, $(\;\;)\otimes u\colon V\to V$ is left adjoint to $\hom\colon V\to V$, or, equivalently, for every $v,w\in V$, \[v\otimes u\leq w \iff v\leq\hom(u,w).\]
Moreover, we also assume that arbitrary joins distribute over finite meets, that is, as a lattice, $V$ is a frame.

\begin{example}
Any frame $V$ defines a commutative and unital quantale, with $\otimes=\wedge$ and $k=\top$. This type of quantales will be used often, and we will refer to them saying that \emph{the quantale $V$ is a frame}. (It should not be confused with our additional assumption that $V$, as a lattice, is a frame.)
\end{example}

In order to define a $V$-category, we will make use of the bicategory $\VRel$, whose objects are sets and whose morphisms $r\colon X\relto Y$ are \emph{$V$-relations}, i.e. maps $X\times Y\to V$; the composition of $V$-relations $r\colon X\relto Y$, $s\colon Y\relto Z$ is a $V$-relation $X\relto Z$ defined by relational composition:
\[(s\cdot r)(x,z)=\bigvee_{y\in Y}r(x,y)\otimes s(y,z).\]
The identity morphism $X\relto X$ is given by
\[1_X(x,x')=\left\{\begin{array}{ll}
k&\mbox{ if }x=x',\\
\bot&\mbox{ elsewhere.}
\end{array}\right.\]
In general every map $f\colon X\to Y$ can be considered as the $V$-relation
\[f(x,y)=\left\{\begin{array}{ll}
k&\mbox{ if }y=f(x),\\
\bot&\mbox{ elsewhere,}
\end{array}\right.\]
so there is a (non-full, bijective on objects) inclusion $\Set\to\VRel$.

Given $r,r'\colon X\relto Y$, $r\leq r'$ if, for all $x\in X$, $y\in Y$, $r(x,y)\leq r'(x,y)$ in $V$.

A \emph{$V$-category} is a pair $(X,a)$, where $X$ is a set and $a\colon X\relto X$ is a $V$-relation such that
\[1_X\leq a\mbox{ and }a\cdot a\leq a;
\]
in pointwise notation this means that:
\begin{description}
\item[\rm (R)] $(\forall \, x\in X)\;\;\;k\leq a(x,x)$,
\item[\rm (T)] $(\forall \, x,x',x''\in X)\;\;\; a(x,x')\otimes a(x',x'')\leq a(x,x'')$;
\end{description}
Property (R) is usually called \emph{reflexivity} while (T) is \emph{transitivity}; they are also called, respectively, \emph{unit} and \emph{associativity} axioms. Pairs $(X,a)$ satisfying (R) are called \emph{$V$-graphs}.

Given two $V$-categories (or $V$-graphs) $(X,a)$ and $(Y,b)$, a map $f\colon X\to Y$ is a \emph{$V$-functor} $f\colon(X,a)\to(Y,b)$ if $f\cdot a\leq b\cdot f$; in pointwise notation this means
\[(\forall x,x'\in X)\;\;\;a(x,x')\leq b(f(x),f(x')).\]
It is easy to check that $V$-categories and $V$-functors define a category, denoted by $\VCat$; the category of $V$-graphs and $V$-functors is denoted by $\VGph$.

\begin{remark}
For each $V$-relation $r\colon X\relto Y$ we can define the opposite relation $r^\circ\colon Y\relto X$ by $r^\circ(y,x)=r(x,y)$. Given a $V$-category $(X,a)$, $(X,a^\circ)$ is also a $V$-category, usually called the \emph{dual} of $(X,a)$. Based on the lemma below, we can conclude that this assignment defines a functor $(\;\;)^\op\colon\VCat\to\VCat$, with $(X,a)^\op=(X,a^\circ)$ and $f^\op=f$.

When $a=a^\circ$ we call the $V$-category $(X,a)$ \emph{symmetric}. The full subcategory of $\VCat$ of symmetric $V$-categories will be denoted by $\VCat_\sym$.
\end{remark}

The proof of the following Lemma is straightforward, and can be found in \cite{CH03}.

\begin{lemma}
\begin{enumerate}
\item If we consider the map $f\colon X\to Y$ as a $V$-relation, we have that
\[f\cdot f^\circ\leq 1_Y\mbox{ and }1_X\leq f^\circ\cdot f;\]
that is, $f^\circ$ is the right adjoint of $f$.
\item For $V$-categories $(X,a),(Y,b)$ and a map $f\colon X\to Y$, the following conditions are equivalent to $f\colon(X,a)\to(Y,b)$ being a $V$-functor:
\begin{tfae}
\item $a\leq f^\circ\cdot b\cdot f$;
\item $f\cdot a\cdot f^\circ\leq b$;
\item $f\cdot a^\circ\leq b^\circ\cdot f$.
\end{tfae}
\end{enumerate}
\end{lemma}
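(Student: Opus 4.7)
The plan separates the two parts. For part (1), I will verify both inequalities by direct pointwise computation using the composition formula $(s\cdot r)(x,z)=\bigvee_y r(x,y)\otimes s(y,z)$ together with the definition of $f$ and $f^\circ$ as $V$-relations. For the counit, $(f\cdot f^\circ)(y,y')=\bigvee_{x\in X} f(x,y)\otimes f(x,y')$: every summand is either $k\otimes k=k$ (precisely when $f(x)=y=y'$) or $\bot$, so the join lies below $k$ when $y=y'$ and is $\bot$ otherwise, which in both cases matches or beats $1_Y(y,y')$. For the unit, $(f^\circ\cdot f)(x,x)=\bigvee_y f(x,y)\otimes f(x,y)$ already contains the summand $k\otimes k=k$ at $y=f(x)$, so $(f^\circ\cdot f)(x,x)\geq k=1_X(x,x)$; off the diagonal $1_X$ is $\bot$, so the inequality is trivial.

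For part (2), my strategy is to use the adjunction $f\dashv f^\circ$ supplied by (1) to transpose the defining inequality $f\cdot a\leq b\cdot f$ of a $V$-functor in the two possible directions, and then to reach (iii) via the involution $(-)^\circ$. Equivalence with (i): starting from $f\cdot a\leq b\cdot f$, insert the unit on the left to get $a\leq f^\circ\cdot f\cdot a\leq f^\circ\cdot b\cdot f$; conversely, from (i) left-compose with $f$ and apply the counit to obtain $f\cdot a\leq f\cdot f^\circ\cdot b\cdot f\leq b\cdot f$. Equivalence with (ii) is dual on the right: right-compose the $V$-functor inequality with $f^\circ$ and apply the counit to get $f\cdot a\cdot f^\circ\leq b\cdot f\cdot f^\circ\leq b$, and conversely right-compose (ii) with $f$ and insert the unit to recover $f\cdot a\leq f\cdot a\cdot f^\circ\cdot f\leq b\cdot f$. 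For (iii), note that $(-)^\circ$ reverses composition and preserves the order on hom-sets, so (iii) rewrites as $a\cdot f^\circ\leq f^\circ\cdot b$; this follows from (i) by right-composing with $f^\circ$ and using the counit, and conversely yields (i) by right-composing with $f$ and using the unit.

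No substantial obstacle is expected: every step is a one-line manipulation in the bicategory $\VRel$ using associativity of composition, monotonicity, and the triangle inequalities from (1). The only delicate points are the case analysis in the pointwise verification of part (1), and keeping careful track of which side of each inequality one composes on in the transpositions of part (2); both are routine once the adjunction $f\dashv f^\circ$ is in hand.
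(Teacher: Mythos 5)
Your argument is correct: the pointwise verification of the unit and counit inequalities in part (1), followed by transposing $f\cdot a\leq b\cdot f$ along the adjunction $f\dashv f^\circ$ (and applying $(-)^\circ$, which reverses composition by commutativity of $\otimes$, for condition (iii)), is precisely the standard computation the paper defers to \cite{CH03}. No gaps; this matches the intended proof.
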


Consider the following commutative diagram
\[\xymatrix{\VCat_\sym\ar[r]^-{\mathsf{I}_2}\ar[dr]&\VCat\ar[r]^-{\mathsf{I}_1}\ar[d]&\VGph\ar[ld]\\
&\Set,}\]
where the horizontal functors are embeddings and the vertical ones are forgetful functors.

\begin{prop}\label{refl}
\begin{enumerate}
\item The functor $\mathsf{I}_1$ is a right adjoint, that is, \VCat\ is a reflective subcategory of \VGph.
\item The functor $\mathsf{I}_2$ has both a left and a right adjoint, that is, $\VCat_\sym$ is both a reflective and a coreflective subcategory of \VCat.
\item The forgetful functors $\VGph\to\Set$, $\VCat\to\Set$, and $\VCat_\sym\to\Set$ are topological.
\end{enumerate}
\end{prop}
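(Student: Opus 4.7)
My overall strategy is to prove~(3) first by writing down initial lifts explicitly, and then give concrete formulas for the reflectors and coreflector in~(1) and~(2), whose universality will follow from the pointwise $V$-functor condition.

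For~(3), given a $U$-structured source $(f_i\colon X\to(X_i,a_i))_{i\in I}$ over a set $X$, I would set
\[a(x,x')=\bigwedge_{i\in I}a_i(f_i(x),f_i(x')).\]
Reflexivity is inherited from each $a_i$, and $a$ is by construction the largest $V$-graph structure on $X$ making every $f_i$ a $V$-functor; initiality then follows from the pointwise characterisation of a $V$-functor. For $\VCat\to\Set$ the same $a$ is also transitive, because monotonicity of $\otimes$ yields $(\bigwedge_i u_i)\otimes(\bigwedge_i v_i)\le u_j\otimes v_j$ for every $j$, hence $a(x,x')\otimes a(x',x'')\le a_j(f_j(x),f_j(x''))$ for every $j$. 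For $\VCat_\sym\to\Set$, one either replaces $a$ by $a\wedge a^\circ$, or composes the initial lift in $\VCat$ with the coreflector constructed in~(2).

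For~(1), I would take the reflector to be the \emph{transitive closure}: set $a^0=1_X$, $a^{n+1}=a\cdot a^n$ in $\VRel$, and $\bar a=\bigvee_{n\ge 0}a^n$. Since $a\ge 1_X$ the sequence $(a^n)$ is monotone, so $\bar a\ge 1_X$; distributivity of $\otimes$ over joins gives $\bar a\cdot\bar a=\bigvee_{m,n}a^{m+n}\le\bar a$. If $f\colon(X,a)\to(Y,b)$ is a $V$-functor to a $V$-category, an induction on~$n$ using $b\cdot b\le b$ shows $f\cdot a^n\le b\cdot f$, hence $f\cdot\bar a\le b\cdot f$, and $f$ factors uniquely through the unit $1_X\colon(X,a)\to(X,\bar a)$.

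For~(2), the coreflection assigns to $(X,a)$ the symmetric $V$-category $(X,a\wedge a^\circ)$. Symmetry and reflexivity are immediate; transitivity reduces, by the same monotonicity estimate used in~(3), to
\[(a\wedge a^\circ)\cdot(a\wedge a^\circ)\le(a\cdot a)\wedge(a^\circ\cdot a^\circ)\le a\wedge a^\circ,\]
where $(X,a^\circ)$ is a $V$-category by the preceding Remark. Universality follows from combining the original $V$-functor condition on a map $f\colon(Y,b)\to(X,a)$ from a symmetric $V$-category with the equivalent form $f\cdot b^\circ\le a^\circ\cdot f$ provided by the Lemma, together with the easy identity $(a\wedge a^\circ)\cdot f=(a\cdot f)\wedge(a^\circ\cdot f)$ (valid because $f$ is a map). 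The reflection is obtained by applying the transitive closure of~(1) to the reflexive symmetric $V$-relation $a\vee a^\circ$; the result is symmetric because $(r^n)^\circ=(r^\circ)^n$. The only delicate point throughout is the stability of transitivity under meets, which always reduces to the elementary inequality $(\bigwedge_i u_i)\otimes(\bigwedge_i v_i)\le\bigwedge_i(u_i\otimes v_i)$; I do not expect any substantive obstacle beyond this bookkeeping.
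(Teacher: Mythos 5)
Your proposal is correct and follows essentially the same route as the paper: the initial lift by the pointwise meet $\bigwedge_i a_i(f_i(x),f_i(x'))$, the coreflection $a\wedge a^\circ$, and the reflection obtained by symmetrizing with $a\vee a^\circ$ and then taking a transitive closure are exactly the paper's constructions. The only differences are presentational: you verify topologicity of all three forgetful functors directly (where the paper checks only $\VGph\to\Set$ and invokes the transfer of topologicity along the (co)reflections, citing \cite{AHS}), and you write the reflector into $\VCat$ as the explicit join $\bigvee_{n\ge 0}a^n$ rather than as the iteration of the pointed endofunctor $(X,a)\mapsto(X,a\cdot a)$ cited from \cite{CH03}; both yield the same structure.
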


\begin{proof}
(1) The left adjoint is built by iteration of the pointed endofunctor $\VGph\to\VGph$ that assigns to each $(X,a)$ the $V$-graph $(X,a\cdot a)$ (see \cite[Theorem 4.4]{CH03} for details).

(2) The symmetrization of a $V$-category $(X,a)$ is defined by $\hat{a}(x,x')=a(x,x')\wedge a(x',x)$, for every $x,x'\in X$. It is easily checked that this defines the right adjoint to $\mathsf{I}_2$. The left adjoint of $\mathsf{I}_2$ is built in two steps: first define $\tilde{a}(x,x')=a(x,x')\vee a(x',x)$, and then $\check{a}$ is obtained by iteration of the pointed endofunctor $\VGph\to\VGph$ of (1), applied to $\tilde{a}$. It is clear that the symmetry of $\check{a}$ follows from the symmetry of $\tilde{a}$.

To show (3), using (1) and (2) it is enough to prove that $\VGph\to\Set$ is topological (see \cite{AHS}). This follows the arguments of \cite[Theorem 4.5]{CH03}. Given $f_i\colon X\to (Y_i,b_i)$, the largest possible $V$-graph structure on $X$ that makes all the $f_i$ $V$-functors is
\[a:=\bigwedge_{i\in I} f_i^\circ\cdot b\cdot f_i,\]
and it is easy to check that $a$ verifies (R).
\end{proof}

The monoidal structure of $V$ induces a monoidal structure on $\VCat$; indeed, for $V$-categories $(X,a)$ and $(Y,b)$, we define $(X,a)\otimes (Y,b)$ by $(X\times Y,a\otimes b)$, where $(a\otimes b)((x,y),(x',y'))=a(x,x')\otimes b(y,y')$, with $f\otimes g=f\times g$. The unit is the $V$-category $I=(\{*\}, \kappa)$, where $\kappa(*,*)=k$.

\begin{theorem}
$\VCat$ is a monoidal closed category.
\end{theorem}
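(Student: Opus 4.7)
The plan is to construct, for each $V$-category $(Y,b)$, an explicit right adjoint $[(Y,b),-]$ to the endofunctor $-\otimes(Y,b)$ on $\VCat$, using the usual enriched internal-hom formula. First I would put
\[ [(Y,b),(Z,c)] := \bigl(\VCat((Y,b),(Z,c)),\, [b,c]\bigr), \qquad [b,c](f,g) := \bigwedge_{y\in Y} c(f(y),g(y)). \]
Reflexivity of $[b,c]$ is immediate from reflexivity of $c$ pointwise in $y$. For transitivity, given $f,g,h$ I would use that, because $\otimes$ distributes over arbitrary joins, it is monotone in each variable, so
\[ \bigwedge_{y}c(f(y),g(y))\otimes\bigwedge_{y}c(g(y),h(y))\leq c(f(y_0),g(y_0))\otimes c(g(y_0),h(y_0))\leq c(f(y_0),h(y_0)) \]
for every $y_0$, and then I take the infimum on the right.

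Second, I would exhibit the natural bijection
\[ \VCat\bigl((X,a)\otimes(Y,b),(Z,c)\bigr)\;\cong\;\VCat\bigl((X,a),\,[(Y,b),(Z,c)]\bigr) \]
by currying. Given a $V$-functor $f\colon(X,a)\otimes(Y,b)\to(Z,c)$, set $\tilde f(x)(y)=f(x,y)$. To see that each $\tilde f(x)$ is a $V$-functor $(Y,b)\to(Z,c)$, I would use $k\leq a(x,x)$ together with the $V$-functoriality of $f$, which gives $b(y,y')=k\otimes b(y,y')\leq a(x,x)\otimes b(y,y')\leq c(f(x,y),f(x,y'))$. The analogous computation with $k\leq b(y,y)$ yields $a(x,x')\leq c(f(x,y),f(x',y))$ for every $y$, i.e.\ $a(x,x')\leq[b,c](\tilde f(x),\tilde f(x'))$, so $\tilde f$ is a $V$-functor into the internal hom.

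Conversely, from a $V$-functor $g\colon(X,a)\to[(Y,b),(Z,c)]$ define $\bar g(x,y)=g(x)(y)$. The key check is
\[ a(x,x')\otimes b(y,y')\leq c(\bar g(x,y),\bar g(x',y')). \]
I would factor the right-hand side through $g(x)(y')$: using transitivity of $c$ it suffices to bound $c(g(x)(y),g(x)(y'))\otimes c(g(x)(y'),g(x')(y'))$ from below by $b(y,y')\otimes a(x,x')$, which follows from the $V$-functoriality of $g(x)$ and of $g$ (the latter via $[b,c](g(x),g(x'))\leq c(g(x)(y'),g(x')(y'))$), combined with the monotonicity of $\otimes$ and the symmetry $a(x,x')\otimes b(y,y')=b(y,y')\otimes a(x,x')$. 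Naturality in all three variables is then routine.

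The main technical point — and the only place where the quantale hypotheses are really used — is this last chain of inequalities: one must split the rectangle step into a vertical and a horizontal step, apply transitivity of $c$, and then use that $\otimes$ is monotone in each variable (equivalently, that it has a right adjoint $\hom$) to pull the two halves together. Everything else is formal manipulation of $V$-relations.
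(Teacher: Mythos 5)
Your proposal is correct and follows exactly the same route as the paper, which defines $[(X,a),(Y,b)]$ as the set of $V$-functors equipped with the pointwise-infimum structure $[f,g]=\bigwedge_{x}b(f(x),g(x))$ and establishes the adjunction by currying (deferring the verifications to Lawvere); your write-up simply supplies the details the paper leaves implicit, and all the individual checks (reflexivity, transitivity, both directions of the currying bijection) are sound.
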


\begin{proof}
It is straightforward to prove that $(\;)\otimes (X,a)\colon\VCat\to\VCat$ is left adjoint to $[(X,a),(\;)]\colon\VCat\to\VCat$, where
$[(X,a),(Y,b)]=(\{f\colon(X,a)\to(Y,b)$ $V$-functor$\},[\;,\;])$,
with
\[ [f,g]=\bigwedge_{x\in X} b(f(x),g(x)),\]
for every pair of $V$-functors $f,g\colon(X,a)\to(Y,b)$. (See \cite{Law73} for details.)
\end{proof}

\begin{remark}
It is clear that, with the same construction, $\VGph$ and $\VCat_\sym$ are also monoidal closed categories.
\end{remark}

\begin{remark}\label{re:change}
As shown in \cite[Section 3.5]{HST14a}, every lax homomorphism $\varphi\colon V\to W$ of quantales, so that $\varphi$ is order preserving, $\varphi(u)\otimes\varphi(v)\leq\varphi(u\otimes v)$, and $l\leq\varphi(k)$, where $u,v\in V$ and $k, l$ are the units of $V$ and $W$ respectively, induces a functor $B_\varphi\colon\VCat\to\WCat$, with $B_\varphi(X,a)=(X,\varphi\cdot a)$, and $B_\varphi(f)=f$. Moreover, any adjunction $\psi\dashv \varphi$ of lax homomorphisms of quantales induces an adjunction $B_\psi\dashv B_\varphi$.

In particular, for every non-degenerate quantale $V$, we may define two lax homomorphisms $\iota,\tau\colon\two\to V$, with $\iota(\bot)=\tau(\bot)=\bot$, $\iota(\top)=k$, and $\tau(\top)=\top$ (which obviously coincide when the quantale is \emph{integral}, that is $k=\top$); $\iota$ has always a right adjoint, the so called \emph{pessimist's map} $p\colon V\to\two$, with $p(v)=\top\iff v\geq k$. The \emph{optimist's map} $o\colon V\to\two$, defined by $o(v)=\top\iff v\neq\bot$, is a lax homomorphism if, and only if, for any $u,v\in V$,
\[u\otimes v=\bot\implies u=\bot \mbox{ or } v=\bot.\]
We call these quantales \emph{optimistic}. For optimistic quantales the optimist's map $o$ is left adjoint to $\tau$. Therefore, \emph{when $V$ is integral and optimistic, we have a chain of adjunctions}
\[\xymatrix{\two\ar[rr]|{\,\iota\,}&&V\ar@/_1pc/[ll]_o\ar@/^1pc/[ll]^p\ar@{}[ll]^{\bot}\ar@{}[ll]_{\bot}}.\]
\end{remark}

\begin{examples}\label{ex:vcats}
\begin{enumerate}
\item If $V=\two=(\{\bot,\top\},\leq)$ with $\otimes=\wedge$, then $\Cats{\two}$ is the category \Ord\ of preordered sets and monotone maps.
\item When $V=P_+=([0,\infty],\geq)$ is the complete real half-line as studied by Lawvere in \cite{Law73}, with $\otimes=+$, and then $\hom(u,v)=v\ominus u=\max(v-u,0)$, $\Cats{P_+}$ is the category \Met\ of \emph{Lawvere (generalized) metric spaces} and non-expansive maps.
    Since $P_+$ is both integral and optimistic, thanks to Remark \ref{re:change} \Ord\ embeds in \Met\ both reflectively and coreflectively:
   \[\xymatrix{\Ord\ar[rr]|{\,B_\iota \,}&&\Met\ar@/_1pc/[ll]_{B_o}\ar@/^1pc/[ll]^{B_p}\ar@{}[ll]^{\bot}\ar@{}[ll]_{\bot}}.\]

    If we take instead $P_\max$, so that in $([0,\infty],\geq)$ we take $\otimes=\wedge$ (note that this is $\max$ for the usual order in the real numbers), then $\Cats{P_\max}$ is the category \UMet\ of \emph{ultrametric spaces} and non-expansive maps. The identity $P_\max\to P_+$ is a lax homomorphism, inducing an embedding $\UMet\to\Met$.

   \item The unit interval $[0,1]$, with its usual order, is a complete lattice. It can be equipped with different tensor products, making $([0,1],\leq)$ a quantale. Here we mention the most relevant ones: the minimum $\wedge$, the multiplication $\times$, and the \emph{\L{}ukasiewicz tensor} $\oplus$, defined by $u\oplus v= \max(u+v-1,0)$, the three of them with unit $1$. We will denote the corresponding quantales respectively by $[0,1]_\wedge$, $[0,1]_\times$, and $[0,1]_\oplus$.

       The bijection $\varphi\colon [0,1]\to [0,\infty]$, with $u\mapsto -\ln u$, defines an isomorphism of quantales $[0,1]_\times\to P_+$, and therefore the category $\Cats{[0,1]_\times}$ is isomorphic to $\Cats{P_+}$, i.e., the category \Met\ of generalized metric spaces.

       The same map $\varphi$ induces an isomorphism of quantales $[0,1]_\wedge\to P_\max$, henceforth $\Cats{[0,1]_\wedge}$ is isomorphic to the category \UMet\ of ultrametric spaces.

       The category $\Cats{[0,1]_\oplus}$ is worth to be mentioned: it is isomorphic to the category $\Met_1$ of \emph{(generalized) metric spaces bounded by $1$} and non-expansive maps. Indeed, $\psi\colon[0,1]_\oplus\to P_+$, with $\psi(u)=1-u$ for every $u\in [0,1]$, is a lax homomorphism of quantales, defining a functor $B_\psi\colon\Cats{[0,1]_\oplus}\to\Met$. It is easily checked that it is an embedding, with image $\Met_1$. The isomorphism $\Cats{[0,1]_\oplus}\to \Met_1$ assigns to each $[0,1]_\oplus$-category $(X,a)$ the metric space $(X,\overline{a})$, with $\overline{a}(x,x')=1-a(x,x')$ for every $x,x'\in X$, and keeps morphisms unchanged.
       (For more information on tensor products on $([0,1],\leq)$ see for instance \cite{CH17} and the references there.)

\item When $V$ is the quantale $\Delta$ of \emph{distribution functions}, that is,
\[\Delta=\{\varphi\colon[0,\infty]\to[0,1]\,|\,\varphi\mbox{ is monotone and }\varphi(x)=\bigvee_{y<x}\varphi(y)\}\]
with the pointwise order and $\otimes$ given by
\[(\varphi\otimes\psi)(x)=\bigvee_{y+z\leq x}\,\varphi(y)\times\psi(z),\]
then $\Cats{\Delta}$ is the category $\ProbMet$ of \emph{probabilistic metric spaces} and non-expansive maps, as studied in \cite{HR13} (see also \cite{CH17}).

We note that, as observed in \cite[Section 3.3]{HR13}, the natural embedding $P_+\to\Delta$ has both a left and a right adjoint, and so we have the following chain of adjunctions
\begin{equation}\label{eq:adjs}
\xymatrix{\Ord\ar[rr]|{\,B_\iota \,}&&\Met\ar@/_1pc/[ll]_{B_o}\ar@/^1pc/[ll]^{B_p}\ar@{}[ll]^{\bot}\ar@{}[ll]_{\bot}\ar[rr]&&
\ProbMet\ar@/_1pc/[ll]\ar@/^1pc/[ll]\ar@{}[ll]^-{\bot}\ar@{}[ll]_-{\bot}};
\end{equation}
in particular, $\Met$ embeds both reflectively and coreflectively in $\ProbMet$.

If we take $\Delta_\wedge$, that is, $\Delta$ together with the tensor product $\wedge$ defined pointwise, then $\Cats{\Delta_\wedge}$ is the category $\ProbUMet$ of \emph{probabilistic ultrametric spaces} and non-expansive maps, where $\UMet$ embeds both reflectively and coreflectively.
\end{enumerate}
\end{examples}

For more examples and information on $V$-categories we refer to \cite[Appendix]{HN20}, and \cite{CH17, HR13}.

\section{Basic results on $V$-groups}

A $V$-category $(X,a)$ equipped with a group structure $(X,+\colon X\times X\to X,i\colon X\to X,0\colon I \to X)$ such that $+\colon(X,a)\otimes (X,a)\to (X,a)$ is a $V$-functor is said to be a \emph{$V$-group}. Note that $0\colon(I,\kappa)\to (X,a)$, as every map from $(I,\kappa)$ to $(X,a)$, is a $V$-functor, and that we do not impose that the inversion $(X,a)\to(X,a)$ is a $V$-functor. We will use the additive notation although our groups need not be abelian. Given two $V$-groups $(X,a)$, $(Y,b)$, a \emph{$V$-homomorphism} $f\colon(X,a)\to(Y,b)$ is a $V$-functor which is a group homomorphism. We will denote by $\VGrp$ the category of $V$-groups and $V$-homomorphisms. We observe that \emph{a $V$-group is precisely a monoid object in the monoidal category $(\VCat, \otimes)$ which is a group}.

\begin{prop}\label{p:categ}
Let $(X,+)$ be a group and $(X,a)$ be a $V$-graph. The following conditions are equivalent:
\begin{tfae}
\item $+\colon(X,a)\otimes(X,a)\to(X,a)$ is a $V$-functor;
\item $(X,a)$ is a $V$-category and $a$ is \emph{invariant by shifting}, that is,
\begin{equation}\label{shift}
(\forall x,x',x''\in X)\;\;a(x',x'')=a(x'+x,x''+x).
\end{equation}
\end{tfae}
\end{prop}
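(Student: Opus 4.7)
The V-functor condition on $+\colon(X,a)\otimes(X,a)\to(X,a)$ unfolds, in pointwise form, to
\[a(x_1,x_2)\otimes a(y_1,y_2)\leq a(x_1+y_1,x_2+y_2)\]
for all $x_i,y_i\in X$, and both implications reduce to manipulations of this single inequality together with the unit/reflexivity axiom $k\leq a(x,x)$.

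For (i) $\Rightarrow$ (ii) my plan is to first extract shift-invariance on both sides and then deduce transitivity. Specializing $y_1=y_2=x$ in the V-functor inequality and using $k\leq a(x,x)$ gives $a(x',x'')\leq a(x'+x,x''+x)$; running the same argument with $-x$ in place of $x$ yields the reverse inequality, producing the displayed right-shift invariance. The symmetric specialization $x_1=x_2=x$ yields, in the same way, the left-shift invariance $a(y_1,y_2)=a(x+y_1,x+y_2)$. Transitivity is then obtained from one further instance of the V-functor inequality: taking $(x_1,x_2,y_1,y_2)=(u,v,0,-v+w)$ gives $a(u,v)\otimes a(0,-v+w)\leq a(u,w)$, and left-shift invariance rewrites $a(0,-v+w)$ as $a(v,w)$, so $a(u,v)\otimes a(v,w)\leq a(u,w)$.

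For (ii) $\Rightarrow$ (i) I would bridge the two factors in $a(x_1,x_2)\otimes a(y_1,y_2)$ through the intermediate element $x_2+y_1$: applying right-shift by $y_1$ to the first factor and left-shift by $x_2$ to the second factor yields
\[a(x_1,x_2)\otimes a(y_1,y_2)=a(x_1+y_1,x_2+y_1)\otimes a(x_2+y_1,x_2+y_2),\]
and transitivity immediately upper-bounds the right-hand side by $a(x_1+y_1,x_2+y_2)$, as required.

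The one conceptual point that needs attention is that in a non-abelian group right-shift and left-shift invariance are genuinely independent properties (a right-translation-invariant metric on a group need not be left-invariant), so the phrase ``$a$ is invariant by shifting'' in (ii) must be understood as encompassing shifts on both sides; the displayed formula records one of the two versions, and the argument for (i) $\Rightarrow$ (ii) produces both simultaneously, while the converse direction visibly uses both. Once this is granted, everything else is routine bookkeeping with the V-relation calculus.
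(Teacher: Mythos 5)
Your proof is correct, and in the forward direction it coincides with the paper's: both obtain shift invariance by tensoring the functoriality inequality with a reflexivity term and cancelling via the inverse shift, and both derive transitivity from the instance $a(u,v)\otimes a(0,-v+w)\leq a(u+0,v-v+w)=a(u,w)$. In the converse direction your bookkeeping differs slightly: you bridge the two factors through the intermediate element $x_2+y_1$, shifting one factor on the right and the other on the left and applying transitivity once, whereas the paper first normalizes both factors to the form $a(0,-)$ and applies transitivity there. The two routes are the same in substance, but yours is the more robust way to write it for non-abelian groups: every step of your chain is a literal instance of one of the two shift identities, while the paper's displayed computation silently permutes terms (its final equality $a(0,-x_1+x_2+x_2'-x_1')=a(x_1+x_1',x_2+x_2')$ is not a single left or right shift unless the group is abelian). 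Your closing remark is also well taken and worth recording: the displayed condition in (ii) states only right-shift invariance, and for a non-abelian group this together with reflexivity and transitivity is genuinely weaker than (i) --- a right-invariant structure is determined by a ``positive cone'' $\delta=a(0,-)$ that need not be conjugation-invariant --- so ``invariant by shifting'' must be read as invariance under translations on both sides, which is exactly what (i) delivers and what both your proof and the paper's proof of the converse actually use.
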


\begin{proof}
If $+\colon(X,a)\otimes (X,a)\to (X,a)$ is a $V$-functor, then, for every $x\in X$, the maps $x+(\;)\colon (X,a)\to (X,a)$ and $(\;)+x\colon(X,a)\to (X,a)$, defined by
\[\xymatrix{X\ar[r]^-{\cong}& I\otimes X\ar[r]^{x\otimes \id_X}&X\otimes X\ar[r]^-{+}&X}\mbox{ and } \xymatrix{X\ar[r]^-{\cong}& X\otimes I\ar[r]^{\id_X\otimes x}&X\otimes X\ar[r]^-{+}&X},\] are $V$-functors; hence,
\[a(x',x'')\leq a(x'+x,x''+x)\leq a(x'+x-x,x''+x-x)=a(x',x'');\]
moreover,
\[a(x,x')\otimes a(x',x'')=a(x,x')\otimes a(0, -x'+x'')\leq a(x+0, x'-x'+x'')=a(x,x'').\]

Conversely, if \eqref{shift} holds, then, for every $x_1,x_2,x_1',x_2'\in X$,
\begin{align*}
a(x_1,x_2)\otimes a(x_1',x_2')&=a(0,-x_1+x_2)\otimes a(-x_1+x_2,-x_1+x_2+x_2'-x_1')\\
&\leq a(0,-x_1+x_2+x_2'-x_1')=a(x_1+x_1',x_2+x_2').
\end{align*}
\end{proof}

\begin{corollary}\label{cor:adjs}
Any lax homomorphism $\varphi\colon V\to W$ of quantales induces a functor $G_\varphi\colon\VGrp\to\WGrp$. Moreover, any adjunction $\psi\dashv \varphi$ induces and adjunction $G_\psi\dashv G_\varphi$.
\end{corollary}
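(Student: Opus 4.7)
The plan is to leverage the quantale-level machinery already assembled in Remark~\ref{re:change}, together with the characterisation in Proposition~\ref{p:categ} that identifies $V$-groups with those $V$-categories whose structure is invariant under shifting \eqref{shift}. Both ingredients --- being a $V$-category and the shift-invariance --- should survive application of a lax homomorphism $\varphi$ without difficulty.

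First I would set $G_\varphi(X,a):=B_\varphi(X,a)=(X,\varphi\cdot a)$, keeping the group operations unchanged. By Remark~\ref{re:change} this is already a $W$-category, so by Proposition~\ref{p:categ} all that remains is shift-invariance of $\varphi\cdot a$. Since $\varphi\cdot a$ is defined pointwise by applying $\varphi$, the shift-invariance of $a$ transfers directly, without any appeal to the laxness condition on $\otimes$. For morphisms, a $V$-homomorphism $f$ is a group homomorphism whose underlying map is a $V$-functor; since $B_\varphi f=f$ is still a $W$-functor (Remark~\ref{re:change}) and the group structure is untouched, $f$ remains a $W$-homomorphism, and functoriality of $G_\varphi$ is inherited from that of $B_\varphi$.

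For the adjunction claim, given $\psi\dashv\varphi$, I would transport the adjunction $B_\psi\dashv B_\varphi$ from Remark~\ref{re:change} to the level of $V$-groups. The unit and counit of that adjunction are identity maps on underlying sets (witnessed by the inequalities $1\leq\varphi\cdot\psi$ and $\psi\cdot\varphi\leq 1$ coming from the quantale adjunction), and identity maps are trivially group homomorphisms. They therefore serve as unit and counit of $G_\psi\dashv G_\varphi$, and the triangle identities, being equalities of underlying maps, pass through unchanged.

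I do not anticipate any real obstacle. The only point to notice is that the cleanest route to showing $+$ is still a $W$-functor goes through shift-invariance rather than directly through the inequality $a(x,x')\otimes a(y,y')\leq a(x+y,x'+y')$: the latter would force us to invoke the laxness inequality $\varphi(u)\otimes_W\varphi(v)\leq\varphi(u\otimes_V v)$, whereas the shift-invariance route needs only monotonicity of $\varphi$ and is therefore completely uniform in $\varphi$.
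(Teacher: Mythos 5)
Your proposal is correct and follows essentially the same route as the paper: the paper's proof is precisely the one-line observation that, by Proposition~\ref{p:categ}, one only needs $\varphi\cdot a$ to remain shift-invariant (which is immediate since $\varphi$ is applied pointwise), with the $W$-category structure and the adjunction both inherited from Remark~\ref{re:change}. Your additional remarks on the unit/counit being identity maps and on which step actually uses laxness are accurate elaborations of the same argument.
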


\begin{proof}
It follows from Proposition \ref{p:categ}, since with $a$ also $\varphi\cdot a$ is invariant by shifting.
\end{proof}

\begin{corollary}
If $(X,a,+)$ and $(Y,b,+)$ are $V$-groups and $f:X\to Y$ is a homomorphism, then $f$ is a $V$-functor if, and only if,
\[(\forall \, x\in X)\;\; a(0,x)\leq b(0,f(x)).\]
\end{corollary}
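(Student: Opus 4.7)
The proof is a direct application of the shift-invariance established in Proposition \ref{p:categ}, combined with the fact that a group homomorphism sends $0$ to $0$ and commutes with the group operations. So I expect no real obstacle; this is essentially a two-line verification once one uses shift-invariance on both sides.

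For the \emph{only if} direction, if $f\colon(X,a)\to(Y,b)$ is a $V$-functor then by definition $a(x',x'')\leq b(f(x'),f(x''))$ for all $x',x''\in X$. Specialising to $x'=0$ and $x''=x$, and using that $f(0)=0$ since $f$ is a group homomorphism, I obtain immediately $a(0,x)\leq b(0,f(x))$.

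For the \emph{if} direction, I take arbitrary $x_1,x_2\in X$ and use shift-invariance of $a$ (Proposition \ref{p:categ}, with shift by $-x_1$) to rewrite
\[a(x_1,x_2)=a(0,-x_1+x_2).\]
By hypothesis this is bounded above by $b(0,f(-x_1+x_2))$, and since $f$ is a group homomorphism $f(-x_1+x_2)=-f(x_1)+f(x_2)$. Now I apply shift-invariance of $b$ (shifting by $f(x_1)$) to conclude
\[b(0,-f(x_1)+f(x_2))=b(f(x_1),-f(x_1)+f(x_2)+f(x_1))\text{ or more simply }b(f(x_1),f(x_2)),\]
where the last identification uses the standard rewriting $b(0,y)=b(f(x_1),f(x_1)+y)$ given by shifting on the right by $f(x_1)$. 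Chaining the three inequalities/equalities gives $a(x_1,x_2)\leq b(f(x_1),f(x_2))$, so $f$ is a $V$-functor.

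The only subtle point is writing the group whose operation is denoted $+$ in the correct order when the group is non-abelian, but since shift-invariance is stated symmetrically (right-shifting in the statement of Proposition \ref{p:categ}) and the analogous left-shift version is obtained similarly by multiplying suitable elements, the argument above goes through verbatim regardless of commutativity. This is why I would expect no genuine obstacle: the corollary is essentially unpacking shift-invariance in the presence of a homomorphism.
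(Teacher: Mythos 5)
Your proof is correct and follows essentially the same route as the paper's: the ``only if'' direction is the trivial specialisation $x'=0$, and the ``if'' direction unpacks shift-invariance from Proposition \ref{p:categ} together with $f$ being a homomorphism, exactly as the paper does via $a(x,x')=a(0,x'-x)\leq b(0,f(x')-f(x))=b(f(x),f(x'))$. The only cosmetic difference is that the paper works with right shifts throughout (literally the form stated in Proposition \ref{p:categ}), whereas your variant with $-x_1+x_2$ uses left-shift invariance, which does hold (left translations are also $V$-isomorphisms, as the proof of that proposition shows) --- just note that your first displayed option $b(f(x_1),-f(x_1)+f(x_2)+f(x_1))$ is the right-shift and does not simplify to $b(f(x_1),f(x_2))$ in the non-abelian case, while your ``more simply'' left-shift identity $b(0,y)=b(f(x_1),f(x_1)+y)$ is the one that does the job.
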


\begin{proof}
For each $x,x'\in X$,
\[a(x,x')=a(0,x'-x)\leq b(0,f(x'-x))=b(0,f(x')-f(x))=b(f(x),f(x')).\]
\end{proof}

\begin{remark} \label{inversion V-functor iff symmetric}
\begin{enumerate}
\item If $(X,a)$ is a $V$-group, then $(X,a^\circ)$ is also a $V$-group. From \eqref{shift} it follows that the inversion is an isomorphism of $V$-groups $i\colon(X,a)\to(X,a^\circ)$, since
     \[a(x,y)=a(-x+x-y,-x+y-y)=a(-y,-x)=a^\circ(-x,-y).\]
     (We thank Dirk Hofmann for this observation.) Moreover, \emph{the inversion $i\colon(X,a)\to(X,a)$ is a $V$-functor if, and only if, the $V$-category $(X,a)$ is symmetric}, that is, if $(X,a,+)$ and $(X,a^\circ, +)$ coincide. A $V$-group $(X,a)$ with $i$ a $V$-functor will be called a \emph{symmetric $V$-group}. When $\otimes=\wedge$, a symmetric $V$-group is exactly an internal group in $\VCat$. We will denote by $\VGrp_\sym$ the full subcategory of \VGrp\ consisting of the symmetric $V$-groups.
\item \emph{When $\otimes=\wedge$, any finite group $(X,a,+)$ is a symmetric $V$-category.} Indeed, if $x\neq 0$, then $-x=nx$ for some $n$, and so \[a(0,-x)=a(0,nx)\geq a(0,x)\otimes \dots\otimes a(0,x)=a(0,x).\]
\end{enumerate}
\end{remark}

\begin{prop}\label{p:sym}
$\VGrp_\sym$ is both a reflective and a coreflective subcategory of \VGrp.
\end{prop}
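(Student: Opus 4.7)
The plan is to lift the reflection and coreflection of Proposition \ref{refl}(2) along the forgetful functor $\VGrp \to \VCat$. Given a $V$-group $(X, a, +)$, I would take $\hat{a}(x,x') = a(x,x') \wedge a(x',x)$, and construct $\check{a}$ from $\tilde{a}(x,x') = a(x,x') \vee a(x',x)$ by iterating the transitive-closure endofunctor, exactly as in the proof of Proposition \ref{refl}(2). The claim is then that the identity maps $\id_X \colon (X, \hat{a}) \to (X, a)$ and $\id_X \colon (X, a) \to (X, \check{a})$ serve as the counit of the coreflection and the unit of the reflection, respectively.

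The main thing to verify is that $\hat{a}$ and $\check{a}$ are invariant by shifting in the sense of Proposition \ref{p:categ}, so that they equip $X$ with $V$-group structures. For $\hat{a}$ and $\tilde{a}$ this is immediate, since $\wedge$ and $\vee$ are computed pointwise and $a$ itself is shift-invariant. For $\check{a}$, the key point is that relational composition preserves shift-invariance: if $b$ is shift-invariant, then a re-indexing $y = z + x$ yields
\begin{align*}
(b \cdot b)(x_1 + x, x_2 + x) &= \bigvee_{y \in X} b(x_1 + x, y) \otimes b(y, x_2 + x) \\
&= \bigvee_{z \in X} b(x_1, z) \otimes b(z, x_2) = (b \cdot b)(x_1, x_2).
\end{align*}
Since pointwise joins of shift-invariant relations are clearly shift-invariant, the whole iteration producing $\check{a}$ stays within shift-invariant $V$-relations.

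Because $\hat{a}$ and $\check{a}$ are symmetric as $V$-categories by Proposition \ref{refl}(2), Remark \ref{inversion V-functor iff symmetric} gives that inversion is automatically a $V$-functor for both, so $(X, \hat{a})$ and $(X, \check{a})$ are symmetric $V$-groups. The two identity maps are group homomorphisms by construction and $V$-functors by Proposition \ref{refl}(2), hence $V$-homomorphisms.

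The universal property in $\VGrp$ now follows at once from the one in $\VCat$: for any symmetric $V$-group $(Y, b)$ and any $V$-homomorphism $f$ to or from $(X,a)$, the underlying map factors uniquely through the $\VCat$-level adjunction as a $V$-functor, and since $f$ is already a group homomorphism and the underlying map is unchanged, this factor is automatically a $V$-homomorphism. There is no real obstacle here; the only genuine calculation is the preservation of shift-invariance under relational composition displayed above, and everything else reduces cleanly to the corresponding statement for $\VCat$.
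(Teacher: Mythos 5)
Your proposal is correct and follows essentially the same route as the paper: the paper's proof also reduces everything to showing, via Proposition \ref{p:categ}, that $\hat{a}$ and $\check{a}$ from Proposition \ref{refl}(2) are invariant by shifting, noting that shift-invariance is preserved by meets, joins, and composition of $V$-relations. You have merely written out the composition computation and the transfer of the universal property explicitly, which the paper leaves implicit.
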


\begin{proof}
Using Proposition \ref{p:categ}, it is enough to show that, for every $V$-group $(X,a,+)$, $\hat{a}$ and $\check{a}$, defined in the proof of Proposition \ref{refl}, are invariant by shifting, which follows from the fact that invariance by shifting is preserved by meets, joins, and composition of $V$-relations.
\end{proof}

\begin{examples}For each quantale $V$ described in Examples \ref{ex:vcats}, we can now consider the corresponding category $\VGrp$. Namely,
\begin{enumerate}
\item when $V=\two$, $\Grps{\two}$ is the category \OrdGrp\ of preordered groups and monotone group homomorphisms studied in \cite{CMFM19};

\item $\Grps{P_+}$ is the category \MetGrp\ whose objects are the (generalized) metric groups, i.e. the Lawvere generalized metric spaces with a group structure which is a non-expansive map, and whose arrows are the non-expansive group homomorphisms;

\item $\Grps{P_\max}$ is the category of (generalized) ultrametric groups and non-expansive group homomorphisms;

\item when $V=\Delta$ (resp. $V=\Delta_\wedge$), $\VGrp$ is the category \ProbMetGrp\ of probabilistic metric (resp. ultrametric) groups.
\end{enumerate}
Thanks to Corollary \ref{cor:adjs}, when $V$ is an integral and optimistic quantale, \OrdGrp\ embeds both reflectively and coreflectively in $\Grps{V}$:
\[\xymatrix{\OrdGrp\ar[rr]|{\,G_\iota\,}&&\Grps{V}\ar@/_1pc/[ll]_{G_o}\ar@/^1pc/[ll]^{G_p}\ar@{}[ll]^{\bot}\ar@{}[ll]_{\bot}};\]
and, moreover, all the embeddings of categories of $V$-categories we mentioned in Examples \ref{ex:vcats} restrict to embeddings of \VGrp. Namely, \eqref{eq:adjs} gives rise to the following chain of adjunctions
\[\xymatrix{\OrdGrp\ar[rr]|{\,G_\iota \,}&&\MetGrp\ar@/_1pc/[ll]_{G_o}\ar@/^1pc/[ll]^{G_p}\ar@{}[ll]^{\bot}\ar@{}[ll]_{\bot}\ar[rr]&&
\ProbMetGrp\ar@/_1pc/[ll]\ar@/^1pc/[ll]\ar@{}[ll]^-{\bot}\ar@{}[ll]_-{\bot}}.\]
\end{examples}

\begin{prop}\label{final}
If $(X,a,+)$ is a $V$-group, $Y$ is a group, and $f\colon X\to Y$ is a surjective group homomorphism, then $b:=f\cdot a\cdot f^\circ$ makes $(Y,b,+)$ a $V$-group and $f$ a $V$-homomorphism.
\end{prop}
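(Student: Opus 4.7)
The plan is to set $b:=f\cdot a\cdot f^\circ$ and verify, in order: (a) $(Y,b)$ is a $V$-category, (b) $b$ is invariant by shifting, and (c) $f$ is a $V$-functor. Then Proposition~\ref{p:categ} delivers the conclusion that $(Y,b,+)$ is a $V$-group and $f$ a $V$-homomorphism.

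For (a) I would first observe that, because $f$ is surjective, one has the sharpened identity $f\cdot f^\circ=1_Y$ (in each entry, $(f\cdot f^\circ)(y,y')$ is $k$ when $y=y'$, via some $x$ with $f(x)=y$, and $\bot$ otherwise). Combining this with reflexivity $1_X\leq a$ and monotonicity of composition gives $1_Y=f\cdot f^\circ\leq f\cdot a\cdot f^\circ=b$, so $b$ is reflexive. Transitivity is the step where surjectivity alone is not enough, since $f^\circ\cdot f$ is only the kernel equivalence of $f$, not $1_X$. This is the step I expect to be the main obstacle, and it is exactly here that shift invariance of $a$ is used: from $b\cdot b$, one is led to expressions $a(x_1,x_2)\otimes a(x_3,x_4)$ with $f(x_2)=f(x_3)$, i.e.\ $x_2-x_3\in\ker f$. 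Using \eqref{shift} to rewrite $a(x_3,x_4)=a(x_2,x_4+x_2-x_3)$ and then applying transitivity of $a$, one obtains $a(x_1,x_4+x_2-x_3)$, which contributes to $b(y,y'')$ because $f(x_4+x_2-x_3)=y''$. This shows $b\cdot b\leq b$.

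For (b), fix $y\in Y$ and choose $x\in X$ with $f(x)=y$. The assignment $(x_1,x_2)\mapsto(x_1+x,x_2+x)$ is a bijection between pairs over $(y_1,y_2)$ and pairs over $(y_1+y,y_2+y)$, and by shift invariance of $a$ the value $a(x_1,x_2)$ is preserved under this bijection. Therefore the suprema defining $b(y_1,y_2)$ and $b(y_1+y,y_2+y)$ agree, giving shift invariance of $b$. Together with (a), Proposition~\ref{p:categ} yields that $+\colon(Y,b)\otimes(Y,b)\to(Y,b)$ is a $V$-functor, so $(Y,b,+)$ is a $V$-group.

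Finally, (c) is essentially free: by Lemma~2.3(ii), $f\colon(X,a)\to(Y,b)$ is a $V$-functor if and only if $f\cdot a\cdot f^\circ\leq b$, and this is an equality by the very definition of $b$. Since $f$ is already a group homomorphism, it is a $V$-homomorphism.
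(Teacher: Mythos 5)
Your argument follows essentially the same route as the paper's proof: reflexivity of $b$ from surjectivity of $f$, shift invariance of $b$ by reindexing the fibres, transitivity by using \eqref{shift} to splice the two factors of $a$ at a common midpoint, and $V$-functoriality of $f$ coming for free from the equality $f\cdot a\cdot f^\circ=b$. The one slip is in the transitivity step: since the groups here need not be abelian, right-shifting by $-x_3+x_2$ gives $a(x_3,x_4)=a(x_2,\,x_4-x_3+x_2)$ rather than $a(x_2,\,x_4+x_2-x_3)$; as $f(x_4-x_3+x_2)=y''$ as well, the argument is unaffected once the element is written in the correct order (the paper performs the symmetric manoeuvre, shifting the first factor to $a(x_1-x_2+x_2',x_2')$).
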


\begin{proof}
Using Proposition \ref{p:categ}, it is enough to show that $(Y,b)$ is a $V$-category and $b$ satisfies \eqref{shift}. Note that, for every $y_1,y_2\in Y$,
\[b(y_1,y_2)=\bigvee_{x,x'\in X} f^\circ(y_1,x)\otimes a(x,x')\otimes f(x',y_2)=\bigvee_{f(x_i)=y_i} a(x_1,x_2).\]
For each $y, y_1,y_2,y_3\in Y$,
\begin{itemize}
\item $\displaystyle b(y,y)=\bigvee_{f(x)=f(x')=y} a(x,x')\geq \bigvee_{f(x)=y} a(x,x)\geq k$;
\item if $f(x)=y$, then
\[b(y_1,y_2)=\bigvee_{f(x_i)=y_i} a(x_1,x_2)=\bigvee_{f(x_i)=y_i} a(x_1+x,x_2+x)=\bigvee_{f(x_i')=y_i+y} a(x_1',x_2')=b(y_1+y,y_2+y);\]
\item and
\begin{align*}
\displaystyle b(y_1,y_2)\otimes b(y_2,y_3)=&\displaystyle\bigvee_{f(x_i)=y_i} a(x_1,x_2)\otimes\bigvee_{f(x_i')=y_i}a(x_2',x_3')=\bigvee_{f(x_i)=y_i=f(x_i')}a(x_1,x_2)\otimes a(x_2',x_3')\\
=&\displaystyle\bigvee_{f(x_i)=y_i=f(x_i')} a(x_1-x_2+x_2',x_2')\otimes a(x_2',x_3')\\
\leq&\displaystyle \bigvee_{f(x_i)=y_i=f(x_i')} a(x_1-x_2+x_2',x_3')=b(y_1,y_3).
\end{align*}
\end{itemize}
\end{proof}

We point out that the structure $b$ defined above is the least $V$-category structure making $f$ a $V$-functor. These $V$-homomorphisms are exactly the extremal epimorphisms in $\VGrp$, and the structure $b$ is the final structure with respect to the topological functor $\VGrp\to\Grp$ we study next.

\section{The category $\VGrp$}

It is well-known that the forgetful functor $|\;\;|\colon\Grp\to\Set$ is monadic, while, as we have already shown, the forgetful functor $|\;\;|\colon\VCat\to\Set$ is topological. Next we will check that these properties transfer to the forgetful functors $\U_1\colon\VGrp\to\Grp$ and $\U_2\colon\VGrp\to\VCat$, with $\U_1$ forgetting the V-categorical structure and $\U_2$ the group structure.

\begin{theorem}\label{top}
The functor $\U_1$ is topological, and the functor $\U_2\colon\VGrp\to\VCat$ is monadic.
Therefore we have the following commutative diagram
\[\xymatrix{&\VGrp\ar[ld]_{\mbox{(topological) }\U_1\;}\ar[rd]^{\;\U_2 \mbox{ (monadic)}}\\
\Grp\ar[dr]_{\mbox{(monadic) } |\;\;|\;}&&\VCat\ar[dl]^{\;|\;\;|\mbox{ (topological)}}\\
&\Set}\]
\end{theorem}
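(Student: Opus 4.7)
For topologicity of $\U_1$, the plan is to adapt the proof of Proposition 2.3(3). Given a source $f_i\colon X\to\U_1(Y_i,b_i,+)$ in $\Grp$, I would set $a:=\bigwedge_{i\in I}f_i^\circ\cdot b_i\cdot f_i$. Reflexivity and transitivity of each $f_i^\circ\cdot b_i\cdot f_i$ are inherited from $b_i$ by monotonicity of $\otimes$, and both properties survive arbitrary meets, so $(X,a)$ is a $V$-category. The new point relative to the pure $\VCat$ case is the shifting-invariance of Proposition 3.1: each $b_i$ is shift-invariant because $(Y_i,b_i,+)$ is a $V$-group, each $f_i$ is a group homomorphism, and the meet of shift-invariant $V$-relations is again shift-invariant. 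Thus $(X,a,+)$ is a $V$-group; the $f_i$ are $V$-homomorphisms by construction, and initiality follows as in Proposition 2.3(3), since any group homomorphism $g\colon (Z,c,+)\to (X,a,+)$ with every $f_i\circ g$ a $V$-homomorphism satisfies $c\leq g^\circ\cdot f_i^\circ\cdot b_i\cdot f_i\cdot g$ for all $i$, whence $c\leq g^\circ\cdot a\cdot g$.

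For monadicity of $\U_2$, the plan is to invoke Beck's monadicity theorem. The first ingredient is a left adjoint $L\colon\VCat\to\VGrp$. Given $(X,a)\in\VCat$, form the free group $F(X)$ on the underlying set and consider the source of all group homomorphisms $\tilde\phi\colon F(X)\to Z$ whose restrictions $\phi\colon X\to Z$ are $V$-functors $(X,a)\to (Z,c)$ into some $V$-group $(Z,c,+)$. Applying the initial lift for $\U_1$ (now established) to this source yields a $V$-group $(F(X),a^*,+)$ in which every such $\tilde\phi$ is a $V$-homomorphism. The inclusion $\eta_X\colon X\hookrightarrow F(X)$ is then a $V$-functor $(X,a)\to(F(X),a^*)$, because $a^*(\eta_X(x),\eta_X(x'))=\bigwedge_\phi c(\phi(x),\phi(x'))\geq a(x,x')$, each $\phi$ being a $V$-functor; the universal property of $L$ follows from the freeness of $F(X)$. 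Conservativity of $\U_2$ is automatic: the set-theoretic inverse of a bijective $V$-isomorphism is a group homomorphism and, by hypothesis, a $V$-functor.

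For Beck's remaining condition, I need to show that $\U_2$ creates coequalizers of $\U_2$-split pairs. Given such $f,g\colon(X,a,+)\rightrightarrows(Y,b,+)$, the split coequalizer in $\VCat$ descends by absoluteness to a split coequalizer of underlying sets under $|\cdot|\circ\U_2=|\cdot|_\Grp\circ\U_1$; since $|\cdot|_\Grp$ is monadic, the pair $\U_1(f),\U_1(g)$ is $|\cdot|_\Grp$-split and its coequalizer $q$ in $\Grp$ has the same underlying set. Topologicity of $\U_1$ then lifts $q$ uniquely to a coequalizer in $\VGrp$ by endowing the quotient group with the final $V$-group structure, which by Proposition 3.5 is $q\cdot b\cdot q^\circ$. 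But the same formula also describes the final $V$-category structure along the same surjection, which is the coequalizer of $\U_2(f),\U_2(g)$ in $\VCat$. Hence $\U_2$ preserves (and creates) this coequalizer, so Beck's theorem applies.

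The main obstacle is the identification of the final $V$-group structure on a quotient with the final $V$-category structure on that same quotient; Proposition 3.5 provides exactly this, and once it is in hand the three Beck conditions for $\U_2$ all follow rapidly from the topologicity of $\U_1$ and the monadicity of $|\cdot|_\Grp$.
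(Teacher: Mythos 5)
Your proposal is correct and follows essentially the same route as the paper: the initial lift $a=\bigwedge_i f_i^\circ\cdot b_i\cdot f_i$ together with shift-invariance for topologicity of $\U_1$, and for monadicity of $\U_2$ the free group with the initial structure as left adjoint, conservativity, and descent of split-pair coequalizers through $\Set$ followed by the final lift along $\U_1$. Your identification of the final $V$-group structure $q\cdot b\cdot q^\circ$ (Proposition \ref{final}) with the final $V$-category structure is exactly the point the paper relies on, just made more explicit; the only quibble is the off-by-two numbering of that proposition, which is immaterial.
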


\begin{proof}
To show that $\U_1$ is a topological functor, let $(f_i\colon X\to (X_i,a_i))_{i\in I}$ be a family of group homomorphisms, with $(X_i,a_i)$, $i\in I$, $V$-groups. Then the initial structure on $\VCat$
\begin{equation}\label{initial}
a(x,y)=\bigwedge_{i\in I}a_i(f_i(x),f_i(y))
\end{equation} makes $X$ a $V$-group, and this defines clearly the $\U_1$-initial lifting for $(f_i)$. Indeed, to check that $(X,a,+)$ is a $V$-group it is enough to verify that \eqref{shift} holds: for every $x,y,z\in X$,
\[a(x+z,y+z)=\bigwedge_{i\in I}a_i(f_i(x)+f_i(z),f_i(y)+f_i(z))=\bigwedge_{i\in I}a_i(f_i(x),f_i(y))=a(x,y).\]
Topologicity of $\U_1$ gives that, with \Grp, also $\VGrp$ is complete and cocomplete.\\

To show that $\U_2$ is monadic, we will use \cite[Theorem 2.4]{VAtlantis}.

\noindent (a) \emph{$\U_2\colon\VGrp\to\VCat$ has a left adjoint}: Given a $V$-category $(X,a)$, let $\F X$ be the free group generated by $X$, and $\eta_X\colon X\to \F X$ the insertion of generators.
Then, for each $V$-group $(Y,b,+)$ and each $V$-functor $f\colon(X,a)\to \U_2(Y,b,+)$, there is a homomorphism $\overline{f}\colon\F X\to Y$ such that $\overline{f}\cdot\eta_X=f$. Equipping $\F X$ with the initial $V$-category structure $\hat{a}$ with respect to all the $\overline{f}\colon\F X\to(Y,b,+)$ as defined in \eqref{initial}, the inclusion $\eta_X\colon(X,a)\to\U_2(\F X,\hat{a})$ is a $V$-functor and it is universal from $X$ to $\U_2$, therefore $\U_2$ has a left adjoint as claimed.

\noindent (b) \emph{$\U_2$ reflects isomorphisms}: given a morphism $f\colon (X,a,+)\to (Y,b,+)$ in $\VGrp$, if $\U_2(f)$ is an isomorphism in $\VCat$ then $f$ is a bijective homomorphism and, for every $x,x'\in X$, $a(x,x')=b(f(x),f(x'))$. Therefore its inverse map is both a homomorphism and a $V$-functor, and so $f$ is an isomorphism in $\VGrp$.

\noindent (c) \emph{$\VGrp$ has and $\U_2$ preserves coequalizers of all $\U_2$-contractible coequalizer pairs.} First recall that the functor $|\;\;|\colon \Grp\to\Set$ is monadic. Given morphisms $f,g\colon X\to Y$ in $\VGrp$ such that $\U_2(f),\U_2(g)$ is a contractible pair in $\VCat$, we know that the coequalizer $q\colon Y\to Q$ in $\VGrp$ is preserved by $\U_1$, and so it is also preserved by $|\;\;|\cdot \U_1$, since $|\;\;|$ is monadic and $|\U_1(f)|, |\U_1(g)|$ form a contractible pair in \Set. Therefore $\U_2(q)$, as a split epimorphism that coequalizes $|\U_1(f)|, |\U_1(g)|$ in \Set, is the coequalizer of $\U_2(f),\U_2(g)$ in $\VCat$.
\end{proof}

We collect below properties of $\VGrp$ that follow from this proposition.

\begin{remark}\label{functors}
\begin{enumerate}
\item \emph{The functor $\U_1\colon\VGrp\to\Grp$ has both a left and a right adjoint.} The left adjoint ${\mathsf{L}}_1
\colon \Grp \to \VGrp$ equips a group $X$ with the discrete
V-category structure: $a(x,y)=k$ if $x=y$, and $a(x,y)=\bot$ otherwise. The right adjoint $\R_1
\colon \Grp \to \VGrp$ equips a group $X$ with the indiscrete
$V$-category structure: $a(x,y)=\top$ for all $x,y\in X$. It is immediate to see
that both structures are compatible with the group operation.

\item \label{complete cocomplete} \emph{\VGrp\ is complete and cocomplete}, as stated in the proof of Proposition \ref{top}. In particular, the initial object is $(\{*\}, \kappa)$ where $\kappa(*,*)=k$, while the terminal object is $(\{*\}, c)$, where $c(*,*)=\top$. Hence $\VGrp$ is a pointed category if and only if, in $V$, $k=\top$.

\item \label{limits} \emph{Limits are preserved by both forgetful functors.} Therefore the product $X\times Y$, of two $V$-groups $(X,a)$ and $(Y,b)$, is the direct product of groups
equipped with the relation $a\wedge b$ given by:
\[ (a\wedge b)((x_1, y_1),(x_2, y_2))=a(x_1,x_2)\wedge b(y_1,y_2).\]
Infinite products are obtained
similarly. The equalizer of a pair $f, g \colon X \to Y$ of
parallel morphisms in $\VGrp$ is the equalizer in $\Grp$
equipped with the $V$-category structure induced by the one
of $X$.

\item \label{colims} \emph{Colimits are preserved by $\U_1\colon\VGrp\to\Grp$ (but not by $\U_2$)}, so they are formed like in \Grp\ and equipped with the suitable $V$-category structure, as outlined next.

    \emph{Coequalizers} are easily described. Given a pair of morphisms $f,g\colon (X,a)\to (Y,b)$, let $q\colon \U_1(Y)\to Q$ be the coequalizer in \Grp\ of $\U_1(f), \U_1(g)$. Defining on $Q$ the structure $c=q\cdot b\cdot q^\circ$, that is $\displaystyle c(z_1,z_2)=\bigvee_{q(y_i)=z_i} b(y_1,y_2)$, thanks to Proposition \ref{final} we know that $q\colon (Y,b)\to (Q,c)$ is a $V$-homomorphism. The universal property is easily checked. This construction shows that \emph{the regular epimorphisms in \VGrp\ are exactly the surjective $V$-homomorphisms $f\colon(X,a)\to(Y,b)$ such that $b=f\cdot a\cdot f^\circ$}.

    \emph{Coproducts} are a bit more difficult. As a group, the coproduct $(Y,b)$ of a family $(X_i,a_i)$ of $V$-groups is the coproduct formed in \Grp\, with the final structure with respect to the forgetful functor into \Grp. There is no easy way of describing this final structure.

\item \label{regular monos} In $\VGrp$ a $V$-homomorphism $f\colon (X,a)\to (Y,b)$ is an \emph{epimorphism} if and only if it is surjective: the preservation of colimits by $\U_1$ and its faithfulness imply that $\U_1$ preserves and reflects epimorphisms; therefore $f$ is an epimorphism in $\VGrp$ if and only if $\U_1(f)$ is an epimorphism in \Grp, that is, $f$ is surjective. \emph{Regular monomorphisms} in $\VGrp$ are the morphisms $f\colon (X,a)\to (Y,b)$ that are injective and with $a(x,x')=b(f(x),f(x'))$, for every $x,x'\in X$. It is easily seen that \emph{(epi, reg mono) is a stable factorization system in \VGrp}: every $f\colon X\to Y$ can be factored as
    \[\xymatrix{(X,a)\ar[rr]^f\ar[rd]_(0.4)e&&(Y,b),\\
    &(f(X),b)\ar[ru]_(0.6)m}\]
    and epimorphisms are pullback stable (just because surjective homomorphisms are pullback stable in \Grp).

\item \label{VGrp regular when V frame} From the construction of coequalizers it follows that a morphism $f\colon (X,a)\to (Y,b)$ is a \emph{regular epimorphism} in $\VGrp$ if, and only if, it is surjective and \emph{final}, that is, $b(y_1,y_2)=\displaystyle\bigvee_{f(x_i)=y_i}a(x_1,x_2)$; and $f$ is a \emph{monomorphism} exactly when it is an injective map. Next we prove that \emph{(reg epi, mono) is a stable factorization system in \VGrp}. Indeed, every $f\colon (X,a)\to (Y,b)$ can be factored as
    \[\xymatrix{(X,a)\ar[rr]^f\ar[rd]_(0.4)e&&(Y,b),\\
    &(f(X),c)\ar[ru]_(0.6)m}\]
    with $c(y_1,y_2)=\displaystyle\bigvee_{f(x_i)=y_i}a(x_1,x_2)$. Pullback-stability of regular epimorphisms is easily checked: if
    \[ \xymatrix{ (X \times_Y Z,a\wedge c) \ar[r]^-{\pi_2} \ar[d]_-{\pi_1} & (Z,c)
\ar[d]^g \\
(X,a) \ar[r]_f & (Y,b) } \]
is a pullback diagram and $f$ is a regular epimorphism, then $\pi_2$ is surjective and
\[\begin{array}{rcl}
c(z_1,z_2)&=& b(g(z_1),g(z_2))\wedge c(z_1,z_2)\\
&=&\displaystyle\left(\bigvee_{f(x_i)=g(z_i)} a(x_1,x_2)\right)\wedge c(z_1,z_2)\\
&=&\displaystyle\bigvee_{f(x_i)=g(z_i)} \left(a(x_1,x_2)\wedge c(z_1,z_2)\right);
\end{array}\]
hence $\pi_2$ is also final.
\end{enumerate}
\end{remark}

\begin{prop}
The category \VGrp\ is a regular category in the sense of Barr \cite{Barr}. Moreover, when $V$ is integral (i.e. $k = \top$) it is normal in the sense of Z. Janelidze \cite{ZJanelidze2010}: every regular epimorphism is a cokernel.
\end{prop}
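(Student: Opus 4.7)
The plan is to assemble the regularity claim directly from the facts already collected in Remark \ref{functors}, and to prove normality in the integral case by explicitly exhibiting every regular epimorphism as the cokernel of its kernel, using the construction of coequalizers given in Remark \ref{functors}(\ref{colims}) together with the corresponding classical fact in $\Grp$.

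\emph{Regularity.} By Remark \ref{functors}(\ref{limits}) and (\ref{colims}), \VGrp\ is finitely complete and has coequalizers; in particular, kernel pairs have coequalizers. Remark \ref{functors}(\ref{VGrp regular when V frame}) establishes that (regular epi, mono) is a pullback-stable factorization system, so regular epimorphisms are pullback stable. These three ingredients are exactly Barr's axioms, so \VGrp\ is regular.

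\emph{Normality when $V$ is integral.} Assume $k=\top$. By Remark \ref{functors}(\ref{complete cocomplete}) the category \VGrp\ is then pointed, the zero object being the trivial group with its unique $V$-category structure. Let $f\colon(X,a)\to(Y,b)$ be a regular epimorphism, so by Remark \ref{functors}(\ref{VGrp regular when V frame}) $f$ is surjective and final, i.e.\ $b(y_1,y_2)=\bigvee_{f(x_i)=y_i}a(x_1,x_2)$. Form the kernel $K=f^{-1}(0)\subseteq X$ in $\Grp$ and equip it with the initial $V$-category structure along the inclusion, as in Remark \ref{functors}(\ref{regular monos}); then $\ker(f)\colon (K,a|_K)\to(X,a)$ is a $V$-homomorphism. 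I claim $f=\mathrm{coker}(\ker f)$. Indeed, the cokernel in \VGrp\ is, by Remark \ref{functors}(\ref{colims}), the coequalizer of $\ker(f)$ with the zero morphism; by the fact that $\U_1$ preserves colimits and that \Grp\ is normal, the underlying group of this coequalizer is precisely $X/K\cong Y$ with the projection $f$, and the resulting $V$-structure is the final one $c(y_1,y_2)=\bigvee_{f(x_i)=y_i}a(x_1,x_2)$. Since $f$ is final, $c=b$, so the cokernel coincides with $f$ as a morphism of $V$-groups.

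The only delicate point is the (needed) observation that the coequalizer structure prescribed by Remark \ref{functors}(\ref{colims}) really agrees with $b$; this is where the hypothesis that $f$ be a regular epimorphism (not merely an extremal one) is essential, and where the assumption $k=\top$ is implicitly used, since without a zero object the phrase ``cokernel of $\ker f$'' is meaningless. With these pieces in place, the verification reduces to matching two explicit formulas for $b$, which is immediate.
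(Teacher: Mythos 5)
Your proposal is correct and follows essentially the same route as the paper: regularity from finite completeness plus the stable (regular epi, mono) factorization system of Remark \ref{functors}, and normality from the observation that a regular epimorphism is surjective and final, so it carries the same $V$-structure as the cokernel of its kernel computed via the coequalizer construction. Your write-up merely spells out in more detail the comparison of the two formulas for $b$ that the paper's proof states in one line.
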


\begin{proof}
As stated above, $\VGrp$ is a (finitely) complete category with a stable factorization system (reg epi, mono), hence it is regular. To show that it is normal when $V$ is integral, we observe that it is pointed and that, for every regular epimorphism $f\colon (X, a)\to (Y,b)$, $\U_1(f)$ is a regular epimorphism in \Grp, hence it is the cokernel of its kernel in \Grp. Then $f$ is the cokernel of its kernel also in $\VGrp$, indeed, thanks to Remark \ref{functors}.\eqref{colims}, the structure $b$ on $Y$ is the final one: $b = f \cdot a \cdot f^{\circ}$.
\end{proof}

In general, the category $\VGrp$ is not Barr-exact \cite{Barr}: see, for example, Remark $2.6$ in \cite{CMFM19} for an example of an equivalence relation which is not effective, in the case $V = \two$. However, $\VGrp$ satisfies a slightly weaker property: it is \emph{efficiently regular} in the sense of \cite{Bourn Baersums}:

\begin{prop}
$\VGrp$ is efficiently regular: it is regular and, if $R$ is an effective equivalence relation over an object $X$ and $T$ is another equivalence relation over $X$ which is a \emph{regular subobject} $j \colon T \rightarrowtail R$ of $R$ (i.e. $j$ is a regular monomorphism in $\VGrp$), then $T$ is itself effective.
\end{prop}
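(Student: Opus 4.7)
Regularity has just been established, so the task is to verify the efficient-regularity condition. The plan is to lift the Barr-exactness of $\Grp$ through the topological forgetful functor $\U_1\colon\VGrp\to\Grp$, using Proposition \ref{final} to promote the resulting group quotient to a genuine quotient in $\VGrp$.

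Given an effective equivalence relation $R$ on $(X,a)$ and a regular monomorphism $j\colon T\rightarrowtail R$, with $T$ an equivalence relation on $(X,a)$, I would first apply $\U_1$: since $\U_1$ preserves limits, $\U_1(T)$ is an equivalence relation on the underlying group of $X$, and since $\Grp$ is Barr-exact it is effective. Thus there is a regular epimorphism $q\colon \U_1(X)\twoheadrightarrow Q$ in $\Grp$ whose kernel pair is $\U_1(T)$. Proposition \ref{final} then equips $Q$ with the final structure $c=q\cdot a\cdot q^\circ$, yielding a morphism $q\colon (X,a)\to (Q,c)$ which is a regular epimorphism in $\VGrp$ by Remark \ref{functors}.\eqref{colims}. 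Let $K$ denote its kernel pair in $\VGrp$. Since both $\U_1$ and $\U_2$ preserve limits (Remark \ref{functors}.\eqref{limits}), the underlying group of $K$ is that of $T$, and the $V$-structure on $K$ is the one induced from $(X,a)\times(X,a)$, that is, the restriction of $a\wedge a$ to the underlying set of $T$.

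The remaining point, and I expect it to be the only place requiring genuine care, is to match this structure with the one $T$ already carries as a regular subobject of $R$. Since $R$ is itself a kernel pair, its inclusion into $(X,a)\times(X,a)$ is a regular monomorphism, so by the characterization in Remark \ref{functors}.\eqref{regular monos} the $V$-structure of $R$ is the restriction of $a\wedge a$. The regular monomorphism $j\colon T\rightarrowtail R$ then forces the $V$-structure of $T$ to be the restriction of that of $R$, hence again the restriction of $a\wedge a$. These two $V$-group structures on the underlying set of $T$ therefore coincide, so $T$ agrees with the kernel pair $K$ of $q$ and is effective. The only conceptual input beyond the routine transfer of properties from $\Grp$ through $\U_1$ is this transitivity of initial structures along regular monomorphisms.
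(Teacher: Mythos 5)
Your argument is correct and follows essentially the same route as the paper: pass to $\Grp$ via $\U_1$, use Barr-exactness there to produce the quotient $q$, lift it with the final structure, and then identify $T$ with the kernel pair of $q$ in $\VGrp$ by observing that $\langle t_1,t_2\rangle=\langle r_1,r_2\rangle\circ j$ is a regular monomorphism (i.e.\ carries the structure restricted from $a\wedge a$), since regular monomorphisms are exactly the initial injections and these compose. The paper phrases this last step as the square over $Y$ being a pullback in $\VGrp$, but the content is identical.
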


\begin{proof}
If $\xymatrix{T \ar@<-2pt>[r]_{t_2} \ar@<2pt>[r]^{t_1} & X}$ is an equivalence relation in $\VGrp$ as in the statement above, then $\U_1(T)$ is a kernel pair of a morphism in $\Grp$, since \Grp\ is Barr-exact; hence the following is a pullback in \Grp\:
\[ \xymatrix{ U_1(T) \ar[r]^{U_1(t_1)} \ar[d]_{U_1(t_2)} & U_1(X) \ar[d]^q \\
U_1(X) \ar[r]_q & Y, } \]
where $q \colon U_1(X) \to Y$ is the coequalizer of $U_1(t_1)$ and $U_1(t_2)$ in $\Grp$. Putting on $Y$ the final structure described in Remark \ref{functors}.\eqref{colims}, the square above becomes a commutative diagram in $\VGrp$. It is a pullback in $\VGrp$, because $\langle t_1, t_2 \rangle \colon T \to X \times X$ is a regular monomorphism in $\VGrp$. Indeed, $\xymatrix{R \ar@<-2pt>[r]_{r_2} \ar@<2pt>[r]^{r_1} & X}$ is an effective equivalence relation in $\VGrp$, and so the monomorphism $\langle r_1, r_2 \rangle \colon R \to X \times X$ is a regular monomorphism in $\VGrp$. Moreover, being $j$ a regular monomorphism in $\VGrp$, we obtain from Remark \ref{functors}.\eqref{regular monos} that also the monomorphism $\langle t_1, t_2 \rangle = \langle r_1, r_2 \rangle \circ j$ is regular in $\VGrp$. We conclude that $T$ is effective in $\VGrp$.
\end{proof}

The main reason why this property is interesting is the fact that, in an efficiently regular category, a morphism is effective for descent if and only if it is a regular epimorphism. This follows from \cite[Proposition $1.2$]{Bourn Baersums} and \cite[Theorem $3.7$]{JST}.

\section{Topological properties}

Regularity of topological groups and openness of quotient maps between topological groups play a crucial role in the topological behaviour of the category of topological groups.
Following the approach of \cite{CCT14}, where, inspired by the description of topological spaces and continuous maps as $(\TT,V)$-categories and $(\TT,V)$-functors (for $\TT$ the ultrafilter monad and $V=\two$), several topological properties were explored, we show next that in $\VGrp$ these properties play also a crucial role.

 We start by recalling some notions, studied in \cite{CCT14}, in the context of $V$-categories.

\begin{definition}
A $V$-functor $f\colon(X,a)\to(Y,b)$ is \emph{proper} if $b\cdot f\leq f\cdot a$, while it is \emph{open} if $b^\circ \cdot f\leq f\cdot a^\circ$.
\end{definition}

 Recalling that a map $f\colon(X,a)\to (Y,b)$ is a $V$-functor if $f\cdot a\leq b\cdot f$, or, equivalently, $f\cdot a^\circ \leq b^\circ\cdot f$, one concludes that $f$ is proper exactly when $b\cdot f=f\cdot a$, while $f$ is open when $b^\circ\cdot f=f\cdot a^\circ$. Therefore, $f\colon(X,a)\to(Y,b)$ is proper if, and only if, $f\colon(X,a^\circ)\to(Y,b^\circ)$  is open; using pointwise notation, $f$ is proper when, for $x\in X$, $y\in Y$,
\[b(f(x),y)=\bigvee_{f(x')=y}a(x,x'),\]
and $f$ is open when
\[b(y,f(x))=\bigvee_{f(x')=y} a(x',x).\]

\begin{definition}
 A $V$-category $(X, a)$ is said to be \emph{regular} if $a\cdot a^\circ\leq a$, which is equivalent to $a=a^\circ$, that is, it is symmetric. Indeed, regularity means that, for all $x_1, x_2, x_3 \in X$, one has $a(x_1, x_2) \otimes a(x_1, x_3) \leq a(x_2, x_3)$. Clearly, if $(X, a)$ is symmetric, then this condition is satisfied. Conversely, from regularity one obtains, choosing $x_1=x_3$:
 \[ a(x_3, x_2) = a(x_3, x_2) \otimes k \leq a(x_3, x_2) \otimes a(x_3, x_3) \leq a(x_2, x_3) \quad \text{for all} \ x_2, x_3 \in X. \]
\end{definition}
As a side remark we observe that regularity of the $V$-relation $a$ coincides with other properties studied in diverse algebraic settings.

\begin{lemma}
For a $V$-category $(X,a)$, the following conditions are equivalent:
\begin{tfae}
\item $(X,a)$ is regular,
\item $(X,a)$ is a symmetric $V$-category,
\item $a$ is a \emph{positive} $V$-relation (see \cite{Sel07}), i.e. $a=b^\circ\cdot b$ for some $V$-relation $b$,
\item $a$ is \emph{difunctional} (see \cite{Ri48}), i.e. $a\cdot a^\circ\cdot a\leq a$. \hfill $\Box$
\end{tfae}
\end{lemma}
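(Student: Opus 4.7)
The plan is to observe that the implication (i) $\Leftrightarrow$ (ii) is essentially already contained in the discussion preceding the lemma, where it is shown that $a\cdot a^\circ\leq a$ forces $a=a^\circ$ (via the choice $x_1=x_3$ and reflexivity), while the converse is trivial once $a$ is symmetric, since then $a\cdot a^\circ=a\cdot a\leq a$ by transitivity. So I would only need to fit (iii) and (iv) into the cycle, and the natural strategy is (ii) $\Rightarrow$ (iii) $\Rightarrow$ (ii) and (ii) $\Rightarrow$ (iv) $\Rightarrow$ (i).

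For (ii) $\Rightarrow$ (iii), the point is that $a$ itself is already of the required form: take $b:=a$, and compute $b^\circ\cdot b=a^\circ\cdot a=a\cdot a$, which equals $a$ because reflexivity gives $a=1_X\cdot a\leq a\cdot a$ and transitivity gives the reverse inequality. The converse (iii) $\Rightarrow$ (ii) is even cheaper: from $a=b^\circ\cdot b$ one has
\[a^\circ=(b^\circ\cdot b)^\circ = b^\circ\cdot(b^\circ)^\circ = b^\circ\cdot b = a,\]
so $a$ is symmetric.

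For (ii) $\Rightarrow$ (iv), assuming $a=a^\circ$, we get $a\cdot a^\circ\cdot a=a\cdot a\cdot a\leq a$ directly from transitivity applied twice. For (iv) $\Rightarrow$ (i), precompose by $1_X\leq a$ on the right: $a\cdot a^\circ = a\cdot a^\circ\cdot 1_X\leq a\cdot a^\circ\cdot a\leq a$, giving regularity.

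I do not expect any real obstacle here; the statement is purely formal manipulation in the bicategory $\VRel$, using only the unit and transitivity laws of a $V$-category and the involutive/contravariant behaviour of $(-)^\circ$ on composition. The one mild subtlety worth flagging is that the equivalence (i) $\Leftrightarrow$ (ii) relies on the existence of a unit $k\leq a(x,x)$ in the lattice, i.e. on reflexivity, which is why the argument in the paragraph preceding the lemma can use $a(x_1,x_1)\geq k$ to deduce symmetry from $a\cdot a^\circ\leq a$.
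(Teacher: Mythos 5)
Your argument is correct and matches the paper's intent: the paper proves (i)$\iff$(ii) in the discussion preceding the lemma (exactly the $x_1=x_3$ trick you cite) and leaves the remaining equivalences as immediate formal consequences, which you supply in the natural way (taking $b=a$ for (ii)$\Rightarrow$(iii), using $(b^\circ\cdot b)^\circ=b^\circ\cdot b$ for the converse, and inserting $1_X\leq a$ for (iv)$\Rightarrow$(i)). No gaps; your remark that reflexivity is what makes regularity imply symmetry is exactly the relevant subtlety.
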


\begin{corollary}
A $V$-group is regular if, and only if, it is symmetric; hence, when $\otimes=\wedge$, a $V$-group is regular if, and only if, it is an internal group in \VCat. \hfill $\Box$
\end{corollary}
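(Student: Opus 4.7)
The plan is to deduce this corollary immediately from the preceding Lemma together with the structural facts about symmetric $V$-groups recorded in Remark \ref{inversion V-functor iff symmetric}. There is essentially no new content beyond assembling what has already been proved; the main point is to spell out the translation between the $V$-categorical notion of regularity and the group-theoretic terminology ``symmetric $V$-group''.

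First, given a $V$-group $(X,a,+)$, the underlying $V$-category is $(X,a)$. By the previous Lemma, $(X,a)$ is regular (i.e.\ $a \cdot a^\circ \leq a$) if and only if $a = a^\circ$, that is, $(X,a)$ is a symmetric $V$-category. By the definition recalled in Remark \ref{inversion V-functor iff symmetric}(1), a $V$-group is called symmetric precisely when its underlying $V$-category is symmetric (equivalently, when the inversion $i\colon(X,a)\to(X,a)$ is a $V$-functor). This establishes the first equivalence.

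For the second assertion, I invoke the other half of Remark \ref{inversion V-functor iff symmetric}(1): when $\otimes = \wedge$, a symmetric $V$-group is exactly an internal group in $\VCat$. Chaining this with the equivalence just established yields the claim. Since everything is a direct consequence of results already in hand, no step presents any real obstacle; the only thing to be careful about is making explicit that ``regular'' for a $V$-group is being interpreted as regularity of its underlying $V$-relation $a$, so that the previous Lemma applies verbatim.
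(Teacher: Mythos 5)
Your proof is correct and follows exactly the route the paper intends: the corollary is stated without proof precisely because it is the immediate combination of the preceding Lemma (regular $\iff$ symmetric for the underlying $V$-category) with Remark \ref{inversion V-functor iff symmetric}(1) (symmetric $V$-group $\iff$ internal group in $\VCat$ when $\otimes=\wedge$). Nothing further is needed.
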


\begin{prop}
A $V$-homomorphism $f\colon(X,a)\to(Y,b)$ between $V$-groups is open if, and only if, it is proper.
\end{prop}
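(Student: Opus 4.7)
The plan is to exploit the identity noted in Remark \ref{inversion V-functor iff symmetric}, which says that for any $V$-group $(X,a,+)$ one has $a(x,y)=a(-y,-x)$, and to combine it with the fact that $f$ is a group homomorphism to transform the openness condition into the properness condition.

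First I would write both conditions pointwise: $f$ is proper iff, for all $x\in X$ and $y\in Y$,
\[ b(f(x),y)=\bigvee_{f(x')=y}a(x,x'),\]
while $f$ is open iff
\[ b(y,f(x))=\bigvee_{f(x')=y}a(x',x).\]
Assuming $f$ is proper, I would compute $b(y,f(x))$ by applying the identity $b(y_1,y_2)=b(-y_2,-y_1)$ (valid in the $V$-group $(Y,b)$) to get $b(y,f(x))=b(-f(x),-y)=b(f(-x),-y)$, where the last equality uses that $f$ is a homomorphism. Now properness, applied at the pair $(-x,-y)$, gives
\[ b(f(-x),-y)=\bigvee_{f(x')=-y}a(-x,x').\]
Performing the bijective change of variables $x'=-x''$ (so that $f(x')=-y$ becomes $f(x'')=y$) and then using $a(-x,-x'')=a(x'',x)$, I obtain $\bigvee_{f(x'')=y}a(x'',x)$, which is precisely the pointwise openness condition. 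The converse implication is completely symmetric: it follows by running the same calculation in reverse, or equivalently by observing that the argument is self-dual (it only used that both $(X,a)$ and $(Y,b)$ are $V$-groups, and that $f$ is a homomorphism).

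Alternatively, one could give a purely relational proof: since inversion $i_X$ is an isomorphism $(X,a)\to (X,a^\circ)$ of $V$-groups, one has $i_X\cdot a\cdot i_X^\circ=a^\circ$, and analogously for $b$. Combined with the group-homomorphism identities $f\cdot i_X=i_Y\cdot f$ (and its adjoint version $f\cdot i_X^\circ=i_Y^\circ\cdot f$), the inequality $b^\circ\cdot f\leq f\cdot a^\circ$ becomes $i_Y\cdot b\cdot i_Y^\circ\cdot f\leq f\cdot i_X\cdot a\cdot i_X^\circ$, i.e.\ $i_Y\cdot(b\cdot f)\cdot i_X^\circ\leq i_Y\cdot(f\cdot a)\cdot i_X^\circ$; cancelling the bijections $i_X,i_Y$ on both sides yields $b\cdot f\leq f\cdot a$, which is properness. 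The main obstacle is merely bookkeeping; the only substantive input is the shift-invariance identity $a(x,y)=a(-y,-x)$ from Remark \ref{inversion V-functor iff symmetric}, which is what makes properness and openness collapse in the presence of a compatible group structure.
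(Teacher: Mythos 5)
Your proposal is correct and follows essentially the same route as the paper: the paper's (one-line) proof also reduces openness to properness via the identity $b(y_1,y_2)=b(-y_2,-y_1)$ from Remark \ref{inversion V-functor iff symmetric} together with $-f(x)=f(-x)$, exactly as in your pointwise computation. Your relational reformulation via $i_X\cdot a\cdot i_X^\circ=a^\circ$ is just a repackaging of the same idea, so there is nothing further to flag.
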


\begin{proof}
For every $x\in X$, $y\in Y$, $b(f(x),y)=b(-y,-f(x))=b(-y,f(-x))$, and the result follows.
\end{proof}

\begin{remark}
For inclusions, the notion of proper $V$-functor is the right substitute for closed subobject. In fact, as proper map in topology means \emph{stably closed map}, and closed embeddings are pullback stable, a topological embedding is closed if, and only if, it is proper. Having this in mind, the following statement corresponds to the well-known fact that every open subgroup of a topological group is closed.
\end{remark}

\begin{corollary}
If $S$ is a subgroup of $(X,a,+)$, then $S$ is open in $X$ if and only if it is proper. \hfill $\Box$
\end{corollary}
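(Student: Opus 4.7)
The plan is to observe that this corollary is an immediate specialization of the preceding proposition, once we recognize that a subgroup $S$ of the $V$-group $(X,a,+)$, equipped with the restricted $V$-category structure $a_S := a\cdot i^\circ \cdot i \cdot a$ (equivalently, $a_S(s,s')=a(s,s')$), is itself a $V$-group. This follows from Proposition~\ref{p:categ}, since invariance by shifting for $a_S$ is inherited from the invariance by shifting for $a$: for $s,s',s''\in S$, $a_S(s'+s,s''+s)=a(s'+s,s''+s)=a(s',s'')=a_S(s',s'')$.

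Next, the inclusion $\iota\colon (S,a_S)\to (X,a)$ is by construction a $V$-functor and a group homomorphism, hence a $V$-homomorphism between $V$-groups. The statements ``$S$ is open in $X$'' and ``$S$ is proper in $X$'' are by definition the statements that $\iota$ is open, respectively proper, as a $V$-functor.

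Therefore, applying the preceding proposition to $\iota\colon (S,a_S)\to (X,a)$, we conclude that $\iota$ is open if and only if it is proper, which is precisely the statement of the corollary. There is no real obstacle here: the whole content of the corollary is already contained in the general fact that for $V$-homomorphisms between $V$-groups the two notions coincide, and the only thing to verify is that the inclusion of a subgroup fits into that framework, which is immediate.
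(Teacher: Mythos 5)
Your proposal is correct and is exactly the argument the paper intends (the corollary is stated with no proof precisely because it is the preceding proposition applied to the inclusion of the subgroup with its induced $V$-group structure). One small slip: the displayed formula for the restricted structure should be $a_S := \iota^\circ\cdot a\cdot\iota$ (the expression $a\cdot \iota^\circ\cdot \iota\cdot a$ does not even typecheck as a composite of $V$-relations), but your pointwise description $a_S(s,s')=a(s,s')$ is the right one and the rest of the argument goes through unchanged.
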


\begin{theorem}
Every regular epimorphism in \VGrp\ is both open and proper.
\end{theorem}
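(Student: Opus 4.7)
The plan is to combine the preceding proposition (open $\Leftrightarrow$ proper for $V$-homomorphisms between $V$-groups) with the explicit description of regular epimorphisms in Remark \ref{functors}.\eqref{colims}: a regular epi $f\colon(X,a)\to(Y,b)$ is exactly a surjective $V$-homomorphism whose codomain carries the final structure, so
\[ b(y_1,y_2)=\bigvee_{f(x_i)=y_i} a(x_1,x_2). \]
Because open and proper are equivalent for $V$-homomorphisms, I only need to verify properness, i.e.\ $b\cdot f\leq f\cdot a$, or in pointwise form
\[ b(f(x),y)\leq \bigvee_{f(x')=y} a(x,x') \]
for all $x\in X$ and $y\in Y$ (the reverse inequality is automatic from $f$ being a $V$-functor).

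The key step is to use shift invariance \eqref{shift} together with the finality formula for $b$. Fix $x\in X$ and $y\in Y$. Using finality,
\[ b(f(x),y)=\bigvee_{\substack{f(x_1)=f(x)\\ f(x_2)=y}} a(x_1,x_2), \]
so it suffices to show that each such summand $a(x_1,x_2)$ is dominated by some $a(x,x')$ with $f(x')=y$. The natural candidate is the translate $x':=x-x_1+x_2$: since $f$ is a group homomorphism, $f(x')=f(x)-f(x_1)+f(x_2)=y$, and shifting by $-x+x_1$ gives
\[ a(x,x')=a(x,x-x_1+x_2)=a(x_1,x_2) \]
by \eqref{shift}. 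Thus $a(x_1,x_2)\leq \bigvee_{f(x')=y} a(x,x')$, and taking the join over all admissible $x_1,x_2$ yields the required inequality. Hence $f$ is proper, and therefore also open.

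There is no real obstacle here: the whole argument is the translation trick $x'=x-x_1+x_2$ exploiting shift invariance, which is precisely the extra leverage $V$-\emph{groups} give us over plain $V$-categories. The only thing to be careful about is not to need $f$ to be symmetric or the inverse map to be a $V$-functor — shift invariance alone suffices to align the fibres of $f$ with orbits of translations, which is what forces openness and properness to coincide and to follow automatically from finality.
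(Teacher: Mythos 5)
Your argument is correct and is essentially the paper's own proof: both rest on the description of regular epimorphisms as surjective final maps and on the translation trick that moves a pair $(x_1,x_2)$ with $f(x_1)=f(x)$ onto a pair anchored at $x$ lying over the required fibre, the paper doing this for openness (via $z=x'-x''+x$, a right shift) and you for properness (via $x'=x-x_1+x_2$, a left shift), with the other half supplied in each case by the preceding open-iff-proper proposition. One cosmetic point: in a non-abelian group the translation you need is the \emph{left} shift by $x-x_1$ rather than a shift by $-x+x_1$; the identity $a(x,x-x_1+x_2)=a(x_1,x_2)$ you use is nevertheless correct, since the $V$-relation of a $V$-group is invariant under left as well as right translations.
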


\begin{proof}
As we checked in Remark \ref{functors}, a regular epimorphism $f\colon(X,a)\to(Y,b)$ in \VGrp\ is a surjective map such that $b=f\cdot a\cdot f^\circ$, that is, for every $y_1,y_2\in Y$, $\displaystyle b(y_1,y_2)=\bigvee_{f(x_i)=y_i} a(x_1,x_2)$. Hence,
\[b(y,f(x))=\bigvee_{f(x')=y,\,f(x'')=f(x)} a(x',x'')=\bigvee_{f(x')=y,\,f(x'')=f(x)} a(x'-x''+x,x)=\bigvee_{f(z)=y} a(z,x).\]
\end{proof}

\section{Algebraic properties} \label{Section alg prop}

The aim of this section is to study the algebraic properties of $V$-groups. \emph{From now on, we will always assume that, in $V$, the equality $k = \top$ holds.} This assumption is not particularly restrictive: all the examples we mentioned in the previous sections satisfy it. But, as we observed in Remark \ref{functors}.\eqref{complete cocomplete}, it implies that the category $\VGrp$ is pointed. Note that, if $k = \top$, then the condition for $(X, a)$ to be a $V$-graph becomes $a(x, x) = k = \top$ for all $x \in X$.

Moreover, due to Proposition \ref{unital} below, we will concentrate mainly in the case of $V$ being a frame, i.e. $\otimes = \wedge$ in $V$: under this assumption, we will explore the main categorical-algebraic notions, showing that, in general, the whole category $\VGrp$ satisfies only relatively weak properties. However, if we restrict our attention to symmetric $V$-groups, we will see that the algebraic behaviour is very similar to the one of the category $\Grp$ of groups. We start by observing that the property of being unital holds in $\VGrp$ if and only if $V$ is a frame.

We recall that a pair of morphisms with the same codomain in a finitely complete category is \emph{jointly strongly epimorphic} if, whenever both morphisms factor through a monomorphism $m$, $m$ is an isomorphism. A pointed, finitely complete category is \emph{unital} \cite{Bourn unital} if, for any two objects $X$ and $Y$, the canonical morphisms
\[\xymatrix{X\ar[r]^-{\langle 1, 0 \rangle}& X \times Y& Y,\ar[l]_-{\langle 0, 1 \rangle}}\]
 induced by the universal property of the product, are jointly strongly epimorphic. We remark that, for $(X,a)$ and $(Y,b)$ $V$-groups, the $V$-category structure on $(X,a)\times(Y,b)$ is $a\wedge b$.

\begin{prop}\label{unital}
$\VGrp$ is a unital category if and only if $V$ is a frame.
\end{prop}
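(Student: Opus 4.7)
The plan is to prove both implications, relying on the description of the product structure $(X,a) \times (Y,b) = (X \times Y, a \wedge b)$ and on the inequality $u \otimes v \leq u \wedge v$, valid in any commutative unital quantale with $k = \top$ (since $u \otimes v \leq u \otimes k = u$ and symmetrically).

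($\Leftarrow$) Assume $\otimes = \wedge$ in $V$. Let $m \colon (Z,c) \to (X \times Y, a \wedge b)$ be a monomorphism in $\VGrp$ through which the canonical injections factor, say $\langle 1,0 \rangle = m \cdot p_X$ and $\langle 0,1 \rangle = m \cdot p_Y$. Monomorphisms in $\VGrp$ are injective by Remark \ref{functors}.\eqref{regular monos}, and since $\Grp$ is unital, $\U_1(m)$ is a group isomorphism. To conclude that $m$ is already an isomorphism in $\VGrp$, it suffices to verify that $c(z_1,z_2) \geq (a \wedge b)(m(z_1),m(z_2))$ for all $z_1, z_2 \in Z$. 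Writing $m(z_i)=(x_i,y_i)$, the injectivity of $m$ together with the commuting of $\langle 1,0\rangle(X)$ and $\langle 0,1\rangle(Y)$ inside $X \times Y$ forces $p_X(X)$ and $p_Y(Y)$ to commute inside $Z$, so that $z_i = p_X(x_i) + p_Y(y_i)$. Applying shift-invariance of $c$, transitivity, and the $V$-functoriality of $p_X$ and $p_Y$, I would compute
\[c(z_1, z_2) = c(0, p_X(-x_1+x_2) + p_Y(-y_1+y_2)) \geq c(0, p_X(-x_1+x_2)) \otimes c(0, p_Y(-y_1+y_2)) \geq a(x_1, x_2) \otimes b(y_1, y_2),\]
and the hypothesis $\otimes = \wedge$ makes the right-hand side equal to $(a \wedge b)(m(z_1),m(z_2))$.

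($\Rightarrow$) Contrapositively, suppose $\otimes \neq \wedge$, so there exist $u, v \in V$ with $u \otimes v < u \wedge v$. I would construct a counterexample on $X = Y = \mathbb{Z}/2\mathbb{Z}$: define shift-invariant $V$-category structures $a$, $b$ by $a(0,1)=a(1,0) = u$ and $b(0,1)=b(1,0) = v$ (with $k=\top$ on the diagonal); a direct verification shows these are $V$-groups. Both $a \wedge b$ and $a \otimes b$ yield compatible $V$-group structures on $X \times Y$, and since $a \otimes b \leq a \wedge b$ the identity map
\[\id \colon (X \times Y, a \otimes b) \longrightarrow (X \times Y, a \wedge b)\]
is a $V$-homomorphism and a monomorphism; it fails to be an isomorphism because $(a \otimes b)((0,0),(1,1)) = u \otimes v < u \wedge v = (a \wedge b)((0,0),(1,1))$. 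Both canonical injections factor through $\id$, since $(a \otimes b)((x_1,0),(x_2,0)) = a(x_1,x_2) \otimes k = a(x_1,x_2)$ and similarly for $\langle 0,1 \rangle$, so the unital property fails.

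The only delicate point is the key inequality in the forward direction: once one observes that $p_X(X)$ and $p_Y(Y)$ commute in $Z$---a consequence of the injectivity of $m$ together with the commuting of the images of the two injections in the direct product---the rest is a straightforward use of shift-invariance, transitivity, and the $V$-functoriality of $p_X, p_Y$, followed by the crucial identification $\otimes = \wedge$.
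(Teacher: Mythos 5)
Your proposal is correct and follows essentially the same route as the paper: the necessity direction uses the identical $\mathbb{Z}/2\mathbb{Z}$ counterexample, and the sufficiency direction is a pointwise unfolding of the paper's observation that the inverse of $m$ is $t=+\circ(p_X\times p_Y)$, a composite of $V$-functors once $\otimes=\wedge$ identifies the tensor and product structures. Your explicit verification that $z_i=p_X(x_i)+p_Y(y_i)$ and that the images of $p_X$ and $p_Y$ commute is a detail the paper absorbs into the unitality of $\Grp$, but the substance is the same.
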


\begin{proof}
Suppose first that $V$ is a frame. Given $X, Y, Z \in \VGrp$, consider the following commutative
diagram
\[ \xymatrix{ X \ar[r]^-{\langle 1, 0 \rangle} \ar[dr]_f & X
\times Y & Y, \ar[l]_-{\langle 0, 1 \rangle} \ar[dl]^g \\
& Z \ar[u]_m & } \] where $m$ is a monomorphism. Since
$\Grp$ is a unital category, $m$ is an isomorphism of groups; it only remains to
show that its inverse $t$ is a $V$-functor. The morphism $t$ is defined by $t(x, y) = f(x) + g(y)$. In other terms, $t = + \circ (f \times g)$. Hence $t$ is a $V$-functor, being the composite of two $V$-functors.

Conversely, suppose that $V$ is not a frame. Hence there exist $u, v \in V$ such that $u \otimes v < u \wedge v$ (observe that, under the assumption that $k = \top$, it is always true that $u \otimes v \leq u \wedge v$ for all $u, v \in V$). Given any pair $(X, a)$ and $(Y, b)$ of $V$-groups, the identity map
\[\xymatrix{(X,a)\otimes(Y,b)\ar[r]^-{1_{X \times Y}}&(X,a) \times (Y,b)}\]
is a monomorphism in $\VGrp$. Moreover, we always have the following commutative diagram in $\VGrp$:
\[ \xymatrix{ (X, a) \ar[r]^-{\langle 1, 0 \rangle} \ar[dr]_{1 \otimes 0} & (X
\times Y, a \wedge b) & (Y, b), \ar[l]_-{\langle 0, 1 \rangle} \ar[dl]^{0 \otimes 1} \\
& (X \times Y, a \otimes b) \ar[u]_{1_{X \times Y}} & } \]
where $(1 \otimes 0)(x)=(x, 0)$ and $(0 \otimes 1)(y)=(0,y)$. We want to show that $1_{X \times Y}$ is not always an isomorphism. We define, on the additive group $\mathbb{Z}_2 = \mathbb{Z} / 2\mathbb{Z}$, two $V$-group structures $a$ and $b$ as follows:
\[ a(0, 1) = a(1, 0) = u, \quad a(0,0) = a(1,1) = k; \qquad b(0, 1) = b(1, 0) = v, \quad b(0,0) = b(1,1) = k. \]
Then
\[ (a \otimes b)((0,0), (1,1)) = a(0, 1) \otimes b(0,1) = u \otimes v < u \wedge v = a(0, 1) \wedge b(0,1) = (a \wedge b)((0,0), (1,1)). \]
\end{proof}

One of the key categorical-algebraic notions is the one of \emph{protomodular} category \cite{Bourn protomod}: for pointed, finitely complete categories, it is equivalent to the validity of the Split Short Five Lemma, and it has several important consequences, mainly related to the homological properties of the category. Unfortunately, if $V$ is non-degenerate (i.e. $\bot \neq \top$), $\VGrp$ is not a protomodular category. Indeed, as shown in \cite{CMFM19}, \OrdGrp\ is not protomodular. Moreover, when $V$ is integral and non-degenerate, using the inclusion $\iota \colon \two \to V$ described in Section \ref{Section V-categories} it is not difficul to see that \OrdGrp\ can be identified with a full subcategory of $\VGrp$ closed under finite limits. Therefore $\VGrp$ is not a protomodular category, since it has a full subcategory, \OrdGrp, which is closed under finite limits and not protomodular. However, when $V$ is a frame, $\VGrp$ is good enough to identify, inside of it, an important full subcategory, which turns out to be the one of symmetric $V$-groups, which is protomodular (and, actually, it satisfies even stronger algebraic properties, as we will see in Section \ref{Section sym Vgroups}). In order to see this, we follow the \emph{objectwise} approach introduced in \cite{MRVdL objects}: the idea is to identify a class of objects, in a category with weak algebraic properties, such that the main categorical-algebraic properties hold locally for constructions involving these ``good'' objects. In order to define formally such objects, we need the notion of stably strong point:

\begin{definition}
A point (i.e. a split epimorphism with a fixed section) $\xymatrix{ A \ar@<-2pt>[r]_f & Y \ar@<-2pt>[l]_s }$ with kernel $n \colon X \to A$ in a pointed finitely complete category is \emph{strong} if $n$ and $s$ are jointly strongly epimorphic. It is \emph{stably strong} if every pullback of it along any morphism $g \colon Z \to Y$ is strong.
\end{definition}

\begin{definition}[\cite{MRVdL objects}]
An object $Y$ of a pointed, finitely complete category $\C$ is
\begin{itemize}
\item[(1)] a \emph{strongly unital object} if the point $\xymatrix{ Y \times Y \ar@<-2pt>[r]_-{\pi_2} & Y \ar@<-2pt>[l]_-{\langle 1, 1 \rangle} }$ is stably strong;

\item[(2)] a \emph{Mal'tsev object} if, for every pullback of split
epimorphisms over $Y$ as in the following diagram
\[
\vcenter{\xymatrix@!0@=5em{ A\times_{Y}Z \ar@<-.5ex>[d]_{\pi_A}
\ar@<-.5ex>[r]_(.7){\pi_Z} & Z \ar@<-.5ex>[d]_g
\ar@<-.5ex>[l]_-{\langle sg,1_Z \rangle} \\
A \ar@<-.5ex>[u]_(.4){\langle 1_A,tf \rangle} \ar@<-.5ex>[r]_f &
Y, \ar@<-.5ex>[l]_s \ar@<-.5ex>[u]_t }} \] the morphisms $\langle
1_{A}, tf \rangle$ and $\langle sg, 1_Z \rangle$ are jointly
strongly epimorphic;

\item[(3)] a \emph{protomodular object} if every point over $Y$ is stably strong.
\end{itemize}
\end{definition}

\begin{prop}
If $(Y,b,+)$ is a strongly unital $V$-group, then, for every $y\in Y$,
\[b(0,y)=b(y,0)\otimes b(0,y).\]
\end{prop}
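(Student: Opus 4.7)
The plan is to apply the strongly unital hypothesis to the pullback of the point $\pi_2 \colon Y \times Y \to Y$ (with section $\langle 1,1\rangle$) along $\id_Y$, which reproduces the point itself. Stable strongness then yields that the kernel $\langle 1, 0\rangle$ and the section $\langle 1, 1\rangle$, viewed as morphisms $(Y,b) \to (Y\times Y, b\wedge b)$, are jointly strongly epimorphic in $\VGrp$. To exploit this, I will exhibit a $V$-group structure $d$ on the group $Y\times Y$ with $d\leq b\wedge b$ such that both $\langle 1, 0\rangle$ and $\langle 1, 1\rangle$ are still $V$-functors into $(Y\times Y,d)$; then the identity map $(Y\times Y, d) \to (Y\times Y, b\wedge b)$ is a monomorphism in $\VGrp$ through which both factor, so joint strong epi-ness forces it to be an isomorphism, i.e.\ $d = b \wedge b$.

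For $d$ I take the structure transported to $Y\times Y$ from the tensor $V$-category $(Y,b)\otimes (Y,b)$ along the group automorphism $(x,w) \mapsto (x-w,w)$, namely
\[ d((x_1,w_1),(x_2,w_2)) = b(x_1-w_1, x_2-w_2) \otimes b(w_1, w_2). \]
The checks are routine and use only shift-invariance and transitivity of $b$: $d$ is shift-invariant, hence a $V$-group structure on $Y\times Y$ by Proposition \ref{p:categ}; the inequality $d \leq b\wedge b$ follows because $b(x_1-w_1, x_2-w_2)\leq k = \top$ gives $d \leq b(w_1,w_2)$, while transitivity of $b$ at the intermediate point $x_2-w_2+w_1$ gives $d\leq b(x_1,x_2)$; finally $d((y_1,0),(y_2,0)) = b(y_1,y_2) \otimes k = b(y_1,y_2)$ and $d((y_1,y_1),(y_2,y_2)) = k \otimes b(y_1,y_2) = b(y_1,y_2)$, so both canonical maps are $V$-functors into $(Y\times Y, d)$.

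Evaluating the forced equality $d = b\wedge b$ at the pair $((0,0),(0,y))$ yields
\[ b(0,-y) \otimes b(0,y) = b(0,0) \wedge b(0,y) = b(0,y), \]
and rewriting $b(0,-y) = b(y,0)$ by shift-invariance of $b$ gives the claimed identity $b(0,y) = b(y,0) \otimes b(0,y)$.

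The main obstacle, in my view, is locating the structure $d$: one needs a $V$-group structure on $Y\times Y$ that is strictly coarser than $b\wedge b$ on pairs of the form $((0,0),(0,y))$ yet still makes both canonical inclusions $V$-functors. The tensor product in $\VCat$, transported along the automorphism $(x,w)\mapsto (x-w,w)$, is tailor-made for this: it agrees with $b$ on the axis $\{(y,0)\}$ and on the diagonal $\{(y,y)\}$ but records on $\{(0,y)\}$ exactly the obstruction $b(y,0)\otimes b(0,y)$ that we want to identify with $b(0,y)$.
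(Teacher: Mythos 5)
Your overall strategy is the same as the paper's: exhibit a compatible structure on the product group that lies below $b\wedge b$, still receives both canonical maps, and records the value $b(y,0)\otimes b(0,y)$ at $((0,0),(0,y))$, then invoke joint strong epimorphy. Your streamlining is genuine and partly an improvement --- you correctly observe that no pullback along the cyclic subgroup $\langle x\rangle$ is needed (strongness of the point $\pi_2\colon Y\times Y\to Y$ itself suffices), and you argue directly rather than by contradiction. However, there is a gap at the step ``$d$ is shift-invariant, hence a $V$-group structure'': this is only true when $Y$ is abelian, and the paper explicitly allows non-abelian groups. The map $\theta\colon(x,w)\mapsto(x-w,w)$ is \emph{not} a group automorphism of the direct product $Y\times Y$; it is the normalizing isomorphism onto the semidirect product $Y\rtimes_{\mathrm{conj}}Y$ determined by the kernel $\langle 1,0\rangle$ and the section $\langle 1,1\rangle$. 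So your $d$ is the tensor structure relative to that semidirect decomposition, and by (the content of) Theorem \ref{t:tensor} its compatibility with $+$ is equivalent to $(u,w)\mapsto(w+u-w,w)$ being a $V$-functor on $(Y,b)\otimes(Y,b)$ --- a condition that is not automatic. Concretely, shift-invariance of $d$ by $(u,v)$ requires $b(x_1-w_1,x_2-w_2)=b(x_1+u-v-w_1,\,x_2+u-v-w_2)$, which forces $u-v$ to interact with conjugation in a way bi-invariance of $b$ does not supply. For $V=\two$, $Y$ the free group on $a,c$ with positive cone the normal submonoid generated by $a$, one has $d((0,0),(a,a))=\top$ but $d((c,0),(a+c,a))=\bot$ because $a+c-a-c$ is not positive; so $d$ fails Proposition \ref{p:categ}(ii). (That particular $Y$ is not strongly unital, but your proof needs $d$ to be a $V$-group structure \emph{before} strong unitality is invoked, so you cannot appeal to the hypothesis to rescue this step.)

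The paper's proof is built precisely to dodge this: it acknowledges that the generating data $c_0((0,0),(y,z))=b(z,y)\otimes b(0,z)$ --- which coincides with your $d$ on pairs based at the origin --- ``may be neither transitive nor compatible with the addition,'' replaces it by the closure $c((0,0),(y,z))=\bigvee c_0((0,0),(y_1,z_1))\otimes\cdots\otimes c_0((0,0),(y_n,z_n))$ over decompositions, verifies that \emph{this} is compatible with $+$, and then proves by a separate (and essential) computation that the closure still takes exactly the value $b(x,0)\otimes b(0,x)$ at $((0,0),(0,x))$. Your argument is correct as written for abelian $Y$, and it can be repaired in general by the same device: replace $d$ by the least $V$-group structure $\bar d\geq d$ on $Y\times Y$ (which still satisfies $\bar d\leq b\wedge b$, since $b\wedge b$ is a compatible structure above $d$, and still receives both legs), and then supply the missing estimate $\bar d((0,0),(0,y))\leq b(y,0)\otimes b(0,y)$, which is exactly the paper's final chain of inequalities. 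Without that closure step and that estimate, the proof does not go through for non-abelian $V$-groups.
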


\begin{proof}
Assume there is $x\in Y$ such that $b(x,0)\otimes b(0,x)\neq b(0,x)$. Consider the subgroup $X$ of $Y$ generated by $x$, equipped with the $V$-categorical structure induced by $b$ (and that we will denote also by $b$). We show next that, in the following diagram
\[ \xymatrix{ Y \ar@{=}[d] \ar[r]^-{\langle 1, 0 \rangle} & Y \times X \ar[d]^{1 \times j} \ar@<-2pt>[r]_-{\pi_2} & X \ar@<-2pt>[l]_-{\langle j, 1 \rangle} \ar[d]^j \\
Y \ar[r]^-{\langle 1, 0 \rangle} & Y \times Y \ar@<-2pt>[r]_-{\pi_2} & Y, \ar@<-2pt>[l]_-{\langle 1, 1 \rangle} } \]
with $X\times Y$ equipped with the product structure
\[d((0,0),(y,y'))=b(0,y)\wedge b(0,y'),\]
$\xymatrix{(Y,b)\ar[r]^-{\langle 1,0\rangle}&(Y\times X,d)&(X,b)\ar[l]_-{\langle j,1\rangle}}$ are not jointly strongly epimorphic, that is, $d$ is not the final structure $c$ for $(\langle 1,0\rangle,\langle j,1\rangle)$.

To define $c$ we first note that, since $(y,z)=\langle 1,0\rangle(y-z)+\langle j,1\rangle(z)$, necessarily
\[c((0,0),(y,z))\geq b(0,y-z)\otimes b(0,z)=b(z,y)\otimes b(0,z).\]
Let us define $c_0\colon (Y\times X)\times(Y\times X)\to V$ by:
\[c_0((0,0),(y,z))=b(z,y)\otimes b(0,z),\]
and extend it by shifting. Then $(Y\times X,c_0)$ is a $V$-graph and, since $c_0$ may be neither transitive nor compatible with the addition on $Y\times X$, we define then $c$ by
\[c((0,0),(y,z))=\bigvee c_0((0,0),(y_1,z_1))\otimes \dots\otimes c_0((0,0),(y_n,z_n)),\]
where $y_1+\dots+y_n=y$ and $z_1+\dots+z_n=z$, and extend it by shifting.

To show that $(Y\times X,c,+)$ is a $V$-group, thanks to Proposition \ref{p:categ}, it is enough to show that $+\colon (Y\times X)\times (Y\times X)\to Y\times X$ is a $V$-functor. Let $y,y'\in Y$, $z,z'\in X$; then
\[c((0,0),(y,z))\otimes c((0,0),(y',z'))=\]
\[=\bigvee c_0((0,0),(y_1,z_1))\otimes\dots\otimes c_0((0,0),(y_n,z_n))\otimes c_0((0,0),(y'_1,z'_1))\otimes\dots\otimes c_0((0,0),(y'_m,z'_m)),\]
where $y_1+\dots y_n=y$, $z_1+\dots+z_n=z$, $y'_1+\dots+y'_m=y'$, $z'_1+\dots+z'_m=z'$, and this is clearly less or equal to
\[\bigvee c_0((0,0),(y_1^*,z_1^*))\otimes\dots\otimes c_0((0,0),(y_l^*,z_l^*)),\]
with $y_1^*+\dots+y_l^*=y+y'$, $z_1^*+\dots z_l^*=z+z'$, that is, $c((0,0),(y+y',z+z'))$.

Finally we are going to show that \[c((0,0),(0,x))=c_0((0,0),(0,x))=b(x,0)\otimes b(0,x),\]
which shows that $c\neq d$ since $d((0,0),(0,x))=b(0,x)$, which is different from $b(x,0)\otimes b(0,x)$ by hypothesis. On one hand, we always have $c_0 \leq c$. On the other hand, given $y_1,\dots,y_n\in Y$, $z_1,\dots,z_n\in X$ such that $y_1+\dots+y_n=0$ and $z_1+\dots+z_n=x$,
\begin{align*}c_0((0,0),(y_1,z_1))&\otimes\dots\otimes c_0((0,0),(y_n,z_n))=\\
&=b(z_1,y_1)\otimes b(0,z_1)\otimes \dots\otimes b(z_n,y_n)\otimes b(0,z_n)\\
&=b(z_1,y_1)\otimes\dots\otimes b(z_n,y_n)\otimes b(0,z_1)\otimes\dots\otimes b(0,z_n)\\
&\leq b(z_1+\dots +z_n,y_1+\dots +y_n)\otimes b(0,z_1+\dots+z_n)=b(x,0)\otimes b(0,x),
\end{align*}
which shows that
\[ c((0,0),(0,x))=\bigvee c_0((0,0),(y_1,z_1))\otimes \dots\otimes c_0((0,0),(y_n,z_n)),\]
where $y_1+\dots+y_n=0,\; z_1+\dots+z_n=x$,
is less than or equal to $b(x,0)\otimes b(0,x)$.
\end{proof}

\begin{theorem}
When $\otimes=\wedge$ in $V$, for a $V$-group $(Y,b,+)$, the following conditions are equivalent:
\begin{tfae}
\item $(Y,b,+)$ is a protomodular object in $\VGrp$;
\item $(Y,b,+)$ is a Mal'tsev object in $\VGrp$;
\item $(Y,b,+)$ is a strongly unital object in $\VGrp$;
\item $(Y,b)$ is a symmetric $V$-category;
\item $(Y,b,+)$ is an internal group in $\VCat$.
\end{tfae}
\end{theorem}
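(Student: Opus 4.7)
The plan is to establish the cycle of implications
(i) $\Rightarrow$ (ii) $\Rightarrow$ (iii) $\Rightarrow$ (iv) $\Leftrightarrow$ (v) $\Rightarrow$ (i). The first two implications are formal in any pointed finitely complete category and hold by the general theory of \cite{MRVdL objects}. The equivalence (iv) $\Leftrightarrow$ (v) is already recorded in Remark \ref{inversion V-functor iff symmetric}. The substantive work therefore concerns (iii) $\Rightarrow$ (iv) and (v) $\Rightarrow$ (i).

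For (iii) $\Rightarrow$ (iv) I would invoke the preceding proposition, which gives $b(0,y) = b(y,0) \otimes b(0,y)$ for every $y \in Y$. Under the standing hypothesis $\otimes = \wedge$, this collapses to $b(0,y) \leq b(y,0)$. Applying the same inequality with $-y$ in place of $y$ and using invariance by shifting produces the reverse inequality; hence $b(0,y) = b(y,0)$ for every $y$, and a further use of shifting yields $b = b^\circ$.

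The heart of the proof is (v) $\Rightarrow$ (i). Let $(f, s)$ be a point in \VGrp\ over $Y$ with kernel $n \colon X \to A$. Because $(Y,b)$ is symmetric, the inversion on $Y$ is a $V$-functor; composing with $s$ and $f$, the homomorphism $a \mapsto -sf(a)$ is a $V$-functor $A \to A$. Since $\otimes = \wedge$ makes the diagonal $\Delta \colon (A,a) \to (A,a) \otimes (A,a)$ a $V$-functor, and since the $V$-functoriality of $+$ implies that a componentwise sum of $V$-functors is again a $V$-functor, we conclude that the retraction $p \colon A \to X$, $p(a) = a - sf(a)$, is a $V$-functor. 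Combining $p$ with $f$ gives a $V$-functor $(p, f) \colon (A,a) \to (X, a|_X) \times (Y,b)$ which is the set-theoretic inverse of the bijection $(x,y) \mapsto n(x) + s(y)$.

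To verify that $(Y,b,+)$ is protomodular it then remains to check that every point over $Y$ is stably strong. Given a monomorphism $m \colon (B,c) \to (A,a)$ in \VGrp\ through which $n$ and $s$ factor as $V$-homomorphisms $n'$ and $s'$, protomodularity of \Grp\ forces $m$ to be a bijection of groups; and $m^{-1}$ is a $V$-functor because, by invariance by shifting, it suffices to establish the chain
\[ a(0, n(x)+s(y)) \;\leq\; a(0, n(x)) \wedge b(0,y) \;\leq\; c(0, n'(x)) \wedge c(0, s'(y)) \;\leq\; c(0, n'(x)+s'(y)), \]
where the first inequality uses that $(p,f)$ is a $V$-functor, the second the $V$-functoriality of $n'$ and $s'$, and the third composition in $(B,c)$ combined with shifting (recall $\otimes = \wedge$). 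For stability under pullback along an arbitrary $g \colon Z \to Y$ it suffices to observe that the analogous retraction on the pullback $A \times_Y Z \to X$ is precisely $p \circ \pi_1$, because the pullback condition $f(a) = g(z)$ makes $a - sg(z) = a - sf(a) = p(a)$; no symmetry hypothesis on $Z$ is needed, and the same computation applies verbatim. I expect this last observation --- that the pullback retraction factors through $p$ --- to be the subtle point of the argument, since a naive attempt would seem to require $Z$ itself to be symmetric.
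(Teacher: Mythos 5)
Your argument is correct and follows essentially the same route as the paper's: the reduction of (i)$\implies$(ii)$\implies$(iii) to \cite{MRVdL objects}, the symmetry computation for (iii)$\implies$(iv), and, for (v)$\implies$(i), the decomposition of an element of $A$ as $(a-sf(a))+sf(a)$ with symmetry of $b$ (plus $V$-functoriality of $s$ and $f$) controlling $a(0,a-sf(a))$ --- the paper packages this last step as the statement that the pullback $V$-structure is final for the kernel and the section, whereas you package it as $V$-functoriality of the retraction $p(a)=a-sf(a)$, but the underlying computation, including your treatment of pullback stability via $p\circ\pi_1$, is identical. The only inaccuracy is your claim that (i)$\implies$(ii)$\implies$(iii) are formal in any pointed finitely complete category: the cited Propositions 6.3 and 7.2 of \cite{MRVdL objects} require regularity, which is why the paper explicitly invokes the fact that $\VGrp$ is a regular category.
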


\begin{proof}
The implications (i)$\implies$(ii)$\implies$(iii) follow from Propositions $7.2$ and $6.3$ in \cite{MRVdL objects}, because $\VGrp$ is a regular category, as observed in Remark \ref{functors}.\eqref{VGrp regular when V frame}.

(iii)$\implies$(iv)$\iff$(v): follows from the previous proposition since, whenever $\otimes=\wedge$, from
\[ b(0,y)=b(y,0)\wedge b(0,y) = b(0, -y) \wedge b(-y, 0) = b(-y, 0) \wedge b(0, -y) = b(0, -y) \]
we conclude that $b(0,y)=b(0,-y)=b(y,0)$, and therefore $(Y,b)$ is a symmetric $V$-category, or, equivalently, the inversion is a $V$-functor, as we observed in Remark \ref{inversion V-functor iff symmetric}.

It remains to show that (v)$\implies$(i): given an internal group $(Y,b,+)$, consider the following diagram in $\VGrp$
\[\xymatrix{ N \ar[d] \ar[r]^-{n} & (Z \times_Y X,d)
\ar@<-2pt>[d]_{f'} \ar[r]^-{h'} & (X,a) \ar@<-2pt>[d]_f \\
0 \ar[r] & (Z,c) \ar@<-2pt>[u]_{\langle 1_Z, sh \rangle} \ar[r]_h & (Y,b),
\ar@<-2pt>[u]_s }\]
where both downwards squares are pullbacks. The pullback structure $d$ on $Z\times_YX$ is given by
\[d((0,0),(z,x))=c(0,z)\wedge a(0,x),\]
for every $(z,x)\in Z\times_YX$. To show that $d$ coincides with the final structure $d'$ for $n$ and $\langle 1_Z,sh\rangle$, we note that, for every $(z,x)\in Z\times_YX$, \[(z,x)=(0,x-sf(x))+(z,sf(x))=(0,x-sf(x))+(z,sh(z)).\]
From $V$-functoriality of $\langle 1_Z,sh\rangle$ one gets immediately that
\[d'((0,0),(z,sh(z))\geq c(0,z).\]
Now, using $V$-functoriality of $s$ and $f$, and symmetry of $(Y,b)$,
\begin{align*}
d'((0,0),(0,x-sf(x))\geq& a(0,x-sf(x))\geq a(0,x)\wedge a(0,s(-f(x)))\\
\geq& a(0,x)\wedge b(0,-f(x))=a(0,x)\wedge b(0,f(x))=a(0,x).
\end{align*}
Therefore $d' \geq d$. In other terms, the identity morphism $1_{Z \times_Y X} \colon (Z \times_Y X, d) \to (Z \times_Y X, d')$ is a $V$-functor. But $1_{Z \times_Y X}$ is clearly a monomorphism, and $n$ and $\langle 1_Z, sh \rangle$ factor through it. Being $d'$ the final structure for these two morphisms, we get that $1_{Z \times_Y X} \colon (Z \times_Y X, d) \to (Z \times_Y X, d')$ is an isomorphism, i.e. $d' = d$.
\end{proof}

As a reflective subcategory of $\VGrp$ (see Proposition \ref{p:sym}), $\VGrp_\sym$ is closed under limits in $\VGrp$. Hence $\VGrp_\sym$ is a protomodular category, thanks to \cite[Corollary $7.4$]{MRVdL objects}. Indeed, by the previous theorem, $\VGrp_\sym$ is the full subcategory of protomodular objects of $\VGrp$. In particular we get the following examples which give rise to protomodular categories:
\begin{enumerate}
\item When $V=2$, $\OrdGrp_\sym$ is the category of groups equipped with a congruence, or in other terms the category whose objects are pairs $(G, N)$ where $G$ is a group and $N$ is a normal subgroup of $G$, and whose arrows are the group homomorphisms that restrict to the normal subgroups.

\item When $V=P_\max$, $\UMetGrp_\sym$ is the category of symmetric ultrametric groups, i.e. ultrametric groups in which the distance is symmetric, and non-expansive homomorphisms. Moreover, if we consider the full subcategory $\UMetGrp_{\sym,0}$ of (classical) ultrametric groups, i.e. of those ultrametric groups that are symmetric, \emph{separated} (if $d(x,y)=0$, then $x=y$) and \emph{finitary} (for all $x, y$ $d(x, y) < \infty$), it is easy to observe that this subcategory is closed in $\VGrp_\sym$ under finite limits, hence it is itself protomodular (since the notion of protomodularity can be expressed only by means of finite limits).

\item When $V=\Delta_\wedge$, $\ProbMetGrp_\sym$ is the category of symmetric probabilistic ultrametric groups.
\end{enumerate}
In Section \ref{Section sym Vgroups} we will investigate more in detail the algebraic properties of $\VGrp_\sym$.

\section{Split extensions} \label{Section split ext}

In this section we investigate the split extensions in $\VGrp$. \emph{We will always assume that $k = \top$ in $V$, so that $\VGrp$ is pointed, but we do not require that $V$ is a frame.} Let \linebreak
$\xymatrix{(X,a)\ar[r]^n&(Z,c)\ar@<-.5ex>[r]_f&(Y,b)\ar@<-.5ex>[l]_s}$ be a split extension in $\VGrp$. Then $\xymatrix{X\ar[r]^n&Z\ar@<-.5ex>[r]_f&Y\ar@<-.5ex>[l]_s}$ is a split extension in \Grp, hence $Z$, as a group, is isomorphic to the semidirect product $X\rtimes_\varphi Y$ with respect to the action $\varphi\colon Y\to \Aut(X)$ of $Y$ on $X$ defined by $\varphi(y)(x)=\varphi_y(x)=s(y)+n(x)-s(y)$. Therefore we can restrict our study to split extensions of the type
\begin{equation}\label{split}
\xymatrix{(X,a)\ar[r]^-{\langle 1,0\rangle}&(X\rtimes_\varphi Y,c)\ar@<-.5ex>[r]_-{\pi_2}&(Y,b)\ar@<-.5ex>[l]_-{\langle 0,1\rangle}}
\end{equation}
We recall that, for $(x,y),(x',y')\in X\times Y$, $(x,y)+_\varphi(x',y')=(x+\varphi_y(x'),y+y')$. In general we will omit the index $\varphi$ in the sum $+$.

First we present a necessary condition for a $V$-category structure $c$ on $X\rtimes_\varphi Y$ to make \eqref{split} a split extension in $\VGrp$.

\begin{lemma}
If \eqref{split} is a split extension in \VGrp\, then, for every $y\in Y$, $\varphi_y\colon (X,a)\to(X,a)$ is a $V$-functor.
\end{lemma}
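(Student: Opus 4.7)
The plan is to exploit shift-invariance of $c$ on $X\rtimes_\varphi Y$ together with the fact that $\langle 1,0\rangle$, being the kernel of $\pi_2$ in a split extension, is structure-reflecting. Since kernels in $\VGrp$ are computed in $\Grp$ equipped with the initial $V$-category structure (Remark \ref{functors}.\eqref{limits}), the first step is to record the identity
\[a(x,x')=c((x,0),(x',0))\qquad \text{for all } x,x'\in X.\]

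Next, I would observe that conjugation by $(0,y)$ inside the ambient $V$-group $(X\rtimes_\varphi Y,c)$ realises $\varphi_y$ on the normal factor, namely
\[(0,y)+(x,0)+(0,-y)=(\varphi_y(x),0),\]
by a direct calculation in the semidirect product. Applying Proposition \ref{p:categ} to $(X\rtimes_\varphi Y,c)$ gives both left and right shift-invariance of $c$, so I can successively left-shift by $(0,-y)$ and right-shift by $(0,y)$ to obtain
\[a(\varphi_y(x),\varphi_y(x'))=c\bigl((\varphi_y(x),0),(\varphi_y(x'),0)\bigr)=c\bigl((x,0),(x',0)\bigr)=a(x,x').\]
In particular $a(x,x')\leq a(\varphi_y(x),\varphi_y(x'))$, so $\varphi_y\colon (X,a)\to (X,a)$ is a $V$-functor (in fact a $V$-isomorphism with inverse $\varphi_{-y}$).

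The substantive step---rather than a genuine obstacle---is recognising that conjugation in any $V$-group decomposes as a composition of left and right translations, each of which is an isomorphism in $\VCat$ by shift-invariance. Once this is observed, no appeal to the $V$-functoriality of the section $\langle 0,1\rangle$ is needed beyond the mere existence of the element $(0,y)$ in the semidirect product, and the result drops out from the fact that $\langle 1,0\rangle$ is an order-embedding.
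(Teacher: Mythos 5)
Your proof is correct and takes essentially the same route as the paper's: both arguments use that the kernel $\langle 1,0\rangle$ carries the initial structure (so $a(x,x')=c((x,0),(x',0))$) and that conjugation by the section's value $s(y)=(0,y)$ realises $\varphi_y$ while preserving $c$, since it is a composite of left and right translations, each of which is an isomorphism in $\VCat$ by shift-invariance. The resulting equality $a(x,x')=a(\varphi_y(x),\varphi_y(x'))$ is exactly the paper's computation, merely written in the explicit semidirect-product coordinates rather than via $n$ and $s$.
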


\begin{proof}
Invariance of $a$ by shifting gives that, for every $x,x'\in X$,
\begin{align*}
a(x,x')=&c(n(x),n(x'))=c(s(y)+n(x)-s(y),s(y)+n(x')-s(y))=c(\varphi_y(x),\varphi_y(x'))\\=&a(\varphi_y(x),\varphi_y(x')).
\end{align*}
\end{proof}

As for preordered groups, there are two possible extremal structures to be considered, a minimal one given by the tensor $\otimes$ in the product, and a maximal one, a \emph{generalized lexicographic structure} we introduce below.

\begin{theorem}\label{t:tensor} 
Let $X$ and $Y$ be groups, $\varphi \colon Y \to \Aut(X)$ a group action, and let $(X\rtimes Y,+)$ be the semidirect product defined in $\Grp$ by $\varphi$. The following assertions are equivalent:
\begin{tfae}
\item $\xymatrix{(X,a)\ar[r]^-{\langle 1,0\rangle}&(X\rtimes Y,a\otimes b)\ar@<-.5ex>[r]_-{\pi_2}&(Y,b)\ar@<-.5ex>[l]_-{\langle 0,1\rangle}}$ is a split extension in $\VGrp$;
\item the map $\overline{\varphi} \colon (X\otimes Y,a\otimes b)\to (X\otimes Y,a\otimes b)$, with $(x,y)\mapsto (\varphi_y(x),y)$, is a $V$-functor.
\end{tfae}
\end{theorem}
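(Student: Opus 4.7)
My plan is to prove the two implications separately, using throughout that the $V$-category structure on the semidirect product in (i) is the tensor $a\otimes b$, which is the same as the structure of the $\VCat$-tensor $(X,a)\otimes(Y,b)$, so that $\overline{\varphi}$ has a sensible source and target either way.

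For (i) $\Rightarrow$ (ii), the key observation is the identity
\[
\overline{\varphi}(x,y) \;=\; (\varphi_y(x),y) \;=\; (0,y) + (x,0) \;=\; \langle 0,1\rangle(y) + \langle 1,0\rangle(x),
\]
which exhibits $\overline{\varphi}$ as the composite
\[
(X,a)\otimes(Y,b) \xrightarrow{\sigma} (Y,b)\otimes(X,a) \xrightarrow{\langle 0,1\rangle\otimes\langle 1,0\rangle} (X\rtimes Y,a\otimes b)\otimes(X\rtimes Y,a\otimes b) \xrightarrow{+} (X\rtimes Y,a\otimes b),
\]
where $\sigma$ is the symmetry of the tensor on $\VCat$. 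Under (i), $+$ is a $V$-functor (because $(X\rtimes Y,a\otimes b,+)$ is a $V$-group) and the remaining arrows are $V$-functors by construction, so $\overline{\varphi}$ is a $V$-functor.

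For (ii) $\Rightarrow$ (i), I first dispose of the easy parts: since $k=\top$, the maps $\langle 1,0\rangle$, $\langle 0,1\rangle$ and $\pi_2$ are $V$-functors for trivial reasons (the ``absent'' coordinate contributes a factor $\top=k$), and the kernel condition holds because limits in $\VGrp$ are computed in $\VCat$ and the restriction of $a\otimes b$ to $X\times\{0\}$ is precisely $a$. So the only substantive point is the $V$-functoriality of the sum, that is, the inequality
\[
a(x_1,x_2)\otimes b(y_1,y_2)\otimes a(x_1',x_2')\otimes b(y_1',y_2') \,\leq\, a\bigl(x_1+\varphi_{y_1}(x_1'),\,x_2+\varphi_{y_2}(x_2')\bigr)\otimes b(y_1+y_1',y_2+y_2').
\]
I will apply (ii) to the points $(x_1',y_1)$ and $(x_2',y_2)$ to obtain
\[
a(x_1',x_2')\otimes b(y_1,y_2) \,\leq\, a\bigl(\varphi_{y_1}(x_1'),\varphi_{y_2}(x_2')\bigr)\otimes b(y_1,y_2),
\]
then commute the factors via commutativity of $\otimes$ and invoke the $V$-functoriality of $+$ on $(X,a)$ and on $(Y,b)$ separately (which hold because $(X,a)$ and $(Y,b)$ are $V$-groups) to close the inequality.

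The step I expect to require the most thought is spotting the correct application of (ii). It would be natural to try to prove the stronger assertion that every $\varphi_y\colon(X,a)\to(X,a)$ is a $V$-isomorphism, but this is more than is needed: (ii) only couples $a(x_1',x_2')$ to $a(\varphi_{y_1}(x_1'),\varphi_{y_2}(x_2'))$ \emph{after} tensoring with $b(y_1,y_2)$, and this asymmetric condition is precisely what matches the twisted law of the semidirect product, mediating between the separate $V$-functorialities of $+$ on $X$ and on $Y$. Once this is recognised, both directions reduce to bookkeeping.
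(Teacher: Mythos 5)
Your proposal is correct and follows essentially the same route as the paper: your implication (ii)$\Rightarrow$(i) is the paper's computation verbatim, and your compositional argument for (i)$\Rightarrow$(ii), based on the identity $(\varphi_y(x),y)=(0,y)+(x,0)$, unwinds to exactly the paper's specialization $x_1=x_2=0$, $y_1'=y_2'=0$ in the $V$-functoriality of $+$. The treatment of the remaining verifications ($\langle 1,0\rangle$, $\langle 0,1\rangle$, $\pi_2$ and the kernel condition, using $k=\top$) also matches the paper.
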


\begin{proof}
(i)$\implies$(ii): $(X\rtimes Y,a\otimes b,+)$ is a $V$-group if, and only if, for all $x_1,x_2,x_1',x_2'\in X$, $y_1,y_2,y_1',y_2'\in Y$,
\[(a\otimes b)((x_1,y_1),(x_2,y_2))\otimes (a\otimes b)((x_1',y_1'),(x_2',y_2'))\leq (a\otimes b)((x_1,y_1)+(x_1',y_1'),(x_2,y_2)+(x_2',y_2'));\]
that is,
\[a(x_1,x_2)\otimes b(y_1,y_2)\otimes a(x_1',x_2')\otimes b(y_1',y_2')\leq a(x_1+\varphi_{y_1}(x_1'),x_2+\varphi_{y_2}(x_2'))\otimes b(y_1+y_1', y_2+y_2').\]
When $x_1=x_2=0$ and $y_1'=y_2'=0$, we obtain
\[a(x_1',x_2')\otimes b(y_1,y_2)\leq a(\varphi_{y_1}(x_1'),\varphi_{y_2}(x_2'))\otimes b(y_1,y_2),\]
and this means exactly that $\overline{\varphi}$ is a $V$-functor.

(ii)$\implies$(i): To verify that $(X\rtimes Y,a\otimes b,+)$ is a $V$-group, let $x_1,x_2,x_1',x_2'\in X$ and $y_1,y_2,y_1',y_2'\in Y$; then
\begin{align*}
(a\otimes b)((x_1,y_1),(x_2,y_2))&\otimes (a\otimes b)((x_1',y_1'),(x_2',y_2'))\\
=&a(x_1,x_2)\otimes b(y_1,y_2)\otimes a(x_1',x_2')\otimes b(y_1',y_2')\\
=&a(x_1,x_2)\otimes a(x_1',x_2')\otimes b(y_1,y_2)\otimes b(y_1',y_2')\\
\leq& a(x_1,x_2)\otimes a(\varphi_{y_1}(x_1'),\varphi_{y_2}(x_2'))\otimes b(y_1,y_2)\otimes b(y_1',y_2')&\mbox{($\overline{\varphi}$ is a $V$-functor)}\\
\leq& a(x_1+\varphi_{y_1}(x_1'),x_2+\varphi_{y_2}(x_2'))\otimes b(y_1+y_1',y_2+y_2')\\
=&(a\otimes b)((x_1,y_1)+(x_1',y_1'),(x_2,y_2)+(x_2',y_2')).
\end{align*}
It remains to show that the homomorphisms of the split extension are $V$-functors, and that $\langle 1_X,0\rangle$ is a kernel. The monomorphisms $\langle 1,0\rangle$ and $\langle 0,1\rangle$ are always $V$-functors, as well as $\pi_2$, since it means that $a(x,x')\otimes b(y,y')\leq b(y,y')$, and this is true because we are assuming that $k = \top$; moreover, for every $x,x'\in X$, $a(x,x')=(a\otimes b)((x,0),(x',0))$, and so $(X,a)$ has the initial structure for the map $\langle 1, 0\rangle$.
\end{proof}

\begin{remark}
\begin{enumerate}
\item Theorem \ref{t:tensor} applied to the case $V=\two$ gives Proposition 5.2 of \cite{CMFM19}: \emph{when $V=\two$, $(X\rtimes Y,a\otimes b,+)$ is a $V$-group if and only if
\[(\forall \, y\geq 0)\;\;(\forall \, x\in X)\;\;\varphi_y(x)\geq x.\]}
Indeed, $V$-functoriality of $\overline{\varphi}$ gives:
\[b(0,y)=(a\otimes b)((x,0),(x,y))\leq (a\otimes b)((\varphi_0(x),0),(\varphi_y(x),y))=a(x,\varphi_y(x))\otimes b(0,y).\]
Then, when $V=\two$ and $y\geq 0$, i.e. $b(0,y)=\top$, $a(x,\varphi_y(x))\wedge\top\geq\top$ means exactly that $\varphi_y(x)\geq x$.

\item The same theorem, applied to the case $V = P_+$, says that $(X\rtimes Y,a\otimes b,+)$ is a $V$-group if and only if
\[ \forall \, x_1, x_2 \in X,\;\forall \, y_1, y_2 \in Y \mbox{ with }b(y_1,y_2)\neq\infty, \quad a(x_1, x_2) \geq a(\varphi_{y_1}(x_1), \varphi_{y_2}(x_2)). \]
Indeed, $\overline{\varphi}$ is a $V$-functor (i.e. a non-expansive map) if and only if, for all $x_1, x_2 \in X, y_1, y_2 \in Y$
\[ (a\otimes b)((x_1, y_1),(x_2, y_2)) \geq (a\otimes b)((\varphi_{y_1}(x_1), y_1),(\varphi_{y_2}(x_2), y_2)), \] which is the same as to say that
\[ a(x_1, x_2) + b(y_1, y_2) \geq a(\varphi_{y_1}(x_1), \varphi_{y_2}(x_2)) + b(y_1, y_2). \]
\end{enumerate}
\end{remark}

Next we analyse how to interpret the \emph{lexicographic structure} in $\VGrp$. For $V$-categories $(X,a)$, $(Y,b)$, consider
$\lex\colon (X\times Y)\otimes(X\times Y)\to V$ defined by
\[\lex((x,y),(x',y'))=\left\{\begin{array}{ll}
a(x,x')&\mbox{ if }y=y'\\
b(y,y')&\mbox{ if }y\neq y'.\end{array}\right.\]
In general $(X\times Y,\lex)$ is a $V$-graph but not necessarily a $V$-category, as shown in the proof of the theorem below.

\begin{theorem}\label{lexic}
Given $V$-groups $(X,a), (Y,b)$, and a group action $\varphi\colon Y\to \Aut(X)$ with $\varphi_y\colon(X,a)\to (X,a)$ a $V$-functor for every $y\in Y$, the following conditions are equivalent:
\begin{tfae}
\item $\xymatrix{(X,a)\ar[r]^-{\langle 1,0\rangle}&(X\rtimes Y,\lex)\ar@<-.5ex>[r]_-{\pi_2}&(Y,b)\ar@<-.5ex>[l]_-{\langle 0,1\rangle}}$ is a split extension in $\VGrp$;
\item for all $x\in X$ and $y\in Y\setminus\{0\}$, $b(y,0)\otimes b(0,y)\leq a(x,0)$.
\end{tfae}
\end{theorem}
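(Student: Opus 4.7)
The plan is to invoke Proposition \ref{p:categ}: once we know that $\lex$ is invariant by shifting on $X\rtimes Y$ and that $(X\rtimes Y,\lex)$ is a $V$-category, the addition is automatically a $V$-functor. It then remains to check that $\langle 1,0\rangle$, $\langle 0,1\rangle$, $\pi_2$ are $V$-functors and that $\langle 1,0\rangle$ is a kernel in $\VGrp$. Reflexivity of $\lex$ is free, since $\lex((x,y),(x,y))=a(x,x)\geq k$, and shift invariance is immediate: as $y_1=y_2$ iff $y_1+y=y_2+y$ and $\varphi_{y_1}=\varphi_{y_2}$ when $y_1=y_2$, the $y_1=y_2$ and $y_1\neq y_2$ cases reduce to shift invariance of $a$ and of $b$ respectively. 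The structural-map verifications are also routine: $\lex((x,0),(x',0))=a(x,x')$ shows that the original $a$ is the initial structure for $\langle 1,0\rangle$, so it is a kernel in $\VGrp$; and $\langle 0,1\rangle$, $\pi_2$ are $V$-functors precisely because $k=\top$ lets the $a$-values be absorbed.

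For (i)$\Rightarrow$(ii), I would fix $y\in Y\setminus\{0\}$ and $x\in X$ and apply transitivity of $\lex$ to the triple $(x,0),(0,y),(0,0)$. Since $0\neq y$, the two outer $\lex$-values are $b(0,y)$ and $b(y,0)$, and the endpoint value is $\lex((x,0),(0,0))=a(x,0)$. Transitivity and commutativity of $\otimes$ yield $b(y,0)\otimes b(0,y)\leq a(x,0)$, which is (ii).

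For (ii)$\Rightarrow$(i), the only nontrivial item is transitivity of $\lex$. I would case-split on the relations among $y_1,y_2,y_3$. When $y_1=y_2=y_3$, transitivity of $a$ does the job. When exactly one of $y_1=y_2$, $y_2=y_3$ holds, one factor is an $a$-value $\leq k=\top$ and is absorbed against the other factor, which is the required $b(y_1,y_3)$. When $y_1,y_2,y_3$ are pairwise distinct, transitivity of $b$ gives $b(y_1,y_2)\otimes b(y_2,y_3)\leq b(y_1,y_3)$. The genuinely delicate case is $y_1=y_3\neq y_2$: setting $y:=y_2-y_1\neq 0$, shift invariance of $b$ rewrites the left-hand side as $b(0,y)\otimes b(y,0)$, and we need this to be $\leq a(x_1,x_3)$; by (ii) we have $b(y,0)\otimes b(0,y)\leq a(x,0)$ for every $x\in X$, and shift invariance of $a$ gives $a(x,0)=a(x+x_3,x_3)$, so letting $x$ range over $X$ produces the required bound $\leq a(x_1,x_3)$.

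The main obstacle is precisely this $y_1=y_3\neq y_2$ case of the triangle inequality, where the ``vertical'' $b$-loop must be controlled by a ``horizontal'' $a$-value; this is exactly what (ii) is designed to enforce, and it is the only place in the proof where (ii) is used, which explains why it is both necessary and sufficient.
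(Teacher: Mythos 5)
Your direction (i)$\Rightarrow$(ii) is correct: you extract condition (ii) from a transitivity instance on the chain $(x,0)\to(0,y)\to(0,0)$, whereas the paper applies $V$-functoriality of $+$ to the pair $(x,y)+(0,-y)=(x,0)$, but the two are equivalent. The verifications of the structural maps and of the kernel property are also fine. The problem is in (ii)$\Rightarrow$(i): your argument never uses the hypothesis that each $\varphi_y\colon(X,a)\to(X,a)$ is a $V$-functor, and that hypothesis is not redundant. Indeed, if $b$ is the discrete structure on $Y$, condition (ii) holds vacuously, while the Lemma preceding Theorem \ref{t:tensor} shows that (i) forces every $\varphi_y$ to be a $V$-functor; so no proof of (ii)$\Rightarrow$(i) that avoids this hypothesis can be complete.

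The precise gap is in the appeal to Proposition \ref{p:categ}. What you verify is invariance of $\lex$ under \emph{right} translations, but for the (generally non-abelian) group $X\rtimes Y$ this is not enough: compatibility of a $V$-category structure with $+$ also requires invariance under \emph{left} translations, which is what $V$-functoriality of $x+(\;)$ gives in the forward direction of Proposition \ref{p:categ} and what its converse actually needs. For $\lex$, left-shift invariance by $(x,y)$ in the case $y_1=y_2$ reads $a(x+\varphi_y(x_1),x+\varphi_y(x_2))=a(x_1,x_2)$, i.e.\ $a(\varphi_y(x_1),\varphi_y(x_2))=a(x_1,x_2)$ --- exactly the point where $V$-functoriality of $\varphi_y$ and of its inverse $\varphi_{-y}$ must enter; it corresponds to the case $y_1=y_2$, $y_1'=y_2'$ in the paper's direct verification that $+$ is a $V$-functor. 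A concrete counterexample to your argument as written: $V=\two$, $X=\mathbb{Z}$ with its usual order, $Y=\mathbb{Z}_2$ discrete, $\varphi_1(n)=-n$, so that $X\rtimes Y$ is the infinite dihedral group. Here (ii) holds, $\lex$ is reflexive and transitive (it is two disjoint copies of $(\mathbb{Z},\leq)$) and right-shift invariant, yet $+$ is not monotone: $(0,1)\leq_{\lex}(0,1)$ and $(0,0)\leq_{\lex}(1,0)$, but $(0,1)+(0,0)=(0,1)\not\leq_{\lex}(-1,1)=(0,1)+(1,0)$. The fix is to verify left-shift invariance as well, using the $V$-functor hypothesis on the $\varphi_y$; your case analysis for transitivity is correct and the rest of the argument then goes through.
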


\begin{proof}
(ii)$\implies$(i): Thanks to Proposition \ref{p:categ}, to show that $(X\rtimes Y,\lex)$ is a $V$-group it is enough to show that $+$ is a $V$-functor; that is, for each $x_1,x_2,x_1',x_2'\in X$, $y_1,y_2,y_1',y_2'\in Y$,
\[\lex((x_1,y_1),(x_2,y_2))\otimes\lex((x_1',y_1'),(x_2',y_2'))\leq \lex((x_1+\varphi_{y_1}(x_1'),y_1+y_1'),(x_2+\varphi_{y_2}(x_2'),y_2+y_2')).\]
For that we consider the possible cases:
\begin{itemize}
\item $y_1+y_1'=y_2+y_2'$, $y_1=y_2$, $y_1'=y_2'$:
\begin{align*}
a(x_1,x_2)\otimes a(x_1',x_2')&\leq a(x_1,x_2)\otimes a(\varphi_{y_1}(x_1'),\varphi_{y_1}(x_2'))&\mbox{($\varphi_{y_1}$ is a $V$-functor)}\\
&\leq a(x_1+\varphi_{y_1}(x_1'),x_2+\varphi_{y_1}(x_2'))&\mbox{($(X,a,+)$ is a $V$-group)}.
\end{align*}
\item $y_1+y_1'=y_2+y_2'$ and $y_1\neq y_2$ (and so $-y_2+y_1=y_2'- y_1'\neq 0$): Using (ii) we conclude that
\[b(y_1,y_2)\otimes b(y_1',y_2')=b(-y_2+y_1,0)\otimes b(0,y_2'-y_1')\leq a(x_1+\varphi_{y_1}(x_1'),x_2+\varphi_{y_2}(x_2')).\]
\item $y_1+y_1'\neq y_2+y_2'$ and $y_1=y_2$:
\[a(x_1,x_2)\otimes b(y_1',y_2')\leq b(y_1',y_2')= b(y_1+y_1',y_2+y_2').\]
\item $y_1+y_1'\neq y_2+y_2'$ and $y_1\neq y_2$, $y_1'\neq y_2'$: immediate.
\end{itemize}
From the definition of $\lex$ it is clear that all the maps of (i) are $V$-functors, and, moreover, $\langle 1,0\rangle$ is the kernel of $\pi_2$.

(i)$\implies$(ii): take $(x,y)+(0,-y)=(x+\varphi_y(0),0)=(x,0)$, for $y\neq 0$: \[b(y,0)\otimes b(-y,0)=\lex((x,y),(0,0))\otimes \lex((0,-y),(0,0))\leq \lex((x,0),(0,0))=a(x,0).\]
\end{proof}

\begin{remark}
\begin{enumerate}
\item Theorem \ref{lexic} applied to the case $V=\two$ gives Proposition 5.4 of \cite{CMFM19}. Indeed, for $V=\two$ condition (ii) is trivially satisfied when the preorder is antisymmetric:
\[(\forall \, y\neq 0)\;\;b(y,0)\wedge b(0,y)=\bot,\]
and, in the presence of a non-positive element $x$ of $X$, so that $a(0,-x)=\bot$, (ii) implies antisymmetry of $b$.

This extends to optimistic quantales. Indeed, if $V$ is optimistic and $X$ is such that $\displaystyle{\wedge_{x\in X} \, a(x,0)=\bot}$, then condition (ii) is valid for $(Y,b)$ if, and only if, for every $y\in Y\setminus\{0\}$, $b(y,0)=\bot$ or $b(0,y)=\bot$. That is, for $V$-groups $(X,a)$, $(Y,b)$, $\lex$ makes the semidirect product $X\rtimes Y$ a $V$-group if, and only if, considering the reflections of $(X,a)$ and $(Y,b)$ in \OrdGrp, the corresponding lexicographic preorder makes $X\rtimes Y$ a preordered group.

In particular, when $V=P_+$ or $V=P_\max$, if $(X,a)$ is a non-bounded (ultra)metric space, then the only $(Y,b)$ which admit the lexicographic order on $X\rtimes Y$ are those with either $b(y,0)=\infty$ or $b(0,y)=\infty$, for any $y\neq 0$.

\item As expected, for symmetric $V$-groups $(Y,b)$ the lexicographic order rarely makes $X\rtimes Y$ a $V$-group: if $(X,a)$ is such that $\displaystyle{\wedge_{x\in X} \, a(x,0)=\bot}$, then, for every $y\neq 0$, $b(y,0)=\bot$, since, for any quantale $V$ and $u\in V$, $u\otimes u=\bot$ implies $u=\bot$.

\end{enumerate}
\end{remark}

Finally we establish the result announced before Theorem \ref{t:tensor}.
\begin{prop}
If $\xymatrix{(X,a)\ar[r]^-{\langle 1,0\rangle}&(X\rtimes Y,c)\ar@<-.5ex>[r]_-{\pi_2}&(Y,b)\ar@<-.5ex>[l]_-{\langle 0,1\rangle}}$ is a split extension in \VGrp, then
\[a\otimes b\leq c\leq \lex.\]
\end{prop}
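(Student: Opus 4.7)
The plan is to prove the two inequalities separately, using in each case only the $V$-functoriality of one or two of the structural maps of the split extension, plus invariance of $c$ under shifting (Proposition \ref{p:categ}).

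For $a\otimes b\leq c$, the idea is to decompose an arbitrary element of the semidirect product as
\[(x,y)=(x,0)+_\varphi(0,y),\]
which holds because $\varphi_0=\id_X$. Since $\langle 1,0\rangle\colon (X,a)\to(X\rtimes Y,c)$ and $\langle 0,1\rangle\colon(Y,b)\to(X\rtimes Y,c)$ are $V$-functors we have $a(x,x')\leq c((x,0),(x',0))$ and $b(y,y')\leq c((0,y),(0,y'))$. Then the $V$-functoriality of $+\colon(X\rtimes Y,c)\otimes(X\rtimes Y,c)\to(X\rtimes Y,c)$ yields
\[a(x,x')\otimes b(y,y')\leq c((x,0),(x',0))\otimes c((0,y),(0,y'))\leq c((x,y),(x',y')),\]
which is exactly $(a\otimes b)((x,y),(x',y'))\leq c((x,y),(x',y'))$.

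For $c\leq\lex$, I would split on whether the second coordinates coincide. If $y\neq y'$, then because $\pi_2\colon(X\rtimes Y,c)\to(Y,b)$ is a $V$-functor,
\[c((x,y),(x',y'))\leq b(y,y')=\lex((x,y),(x',y')).\]
If instead $y=y'$, invariance of $c$ under shifting (applied with the group inverse $-(x,y)=(-\varphi_{-y}(x),-y)$ added on the right) gives
\[c((x,y),(x',y))=c((0,0),(x'-x,0)).\]
Since $\langle 1,0\rangle$ is a kernel, it is a regular monomorphism in $\VGrp$, so by Remark \ref{functors}.\eqref{regular monos} the structure $a$ is exactly that induced by $c$ along $\langle 1,0\rangle$; combined with invariance of $a$ under shifting,
\[c((0,0),(x'-x,0))=a(0,x'-x)=a(x,x')=\lex((x,y),(x',y)).\]

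I do not anticipate any serious obstacle: the proof is essentially a bookkeeping exercise. The only point deserving a little care is the case $y=y'$ of the upper bound, where one must correctly identify the group inverse of $(x,y)$ in the semidirect product and invoke the kernel property of $\langle 1,0\rangle$ to turn the shifted value of $c$ back into $a$.
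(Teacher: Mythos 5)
Your proposal is correct and follows essentially the same route as the paper: the lower bound via the decomposition $(x,y)=(x,0)+(0,y)$ together with $V$-functoriality of the two injections and of $+$, and the upper bound via $V$-functoriality of $\pi_2$ in the case $y\neq y'$ and the kernel (regular monomorphism) property of $\langle 1,0\rangle$ combined with shift-invariance in the case $y=y'$. The only cosmetic difference is that you carry out the computations at arbitrary pairs, whereas the paper works at the base point $(0,0)$ and lets shift-invariance do the rest.
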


\begin{proof}
From the equality $(x,0)+(0,y)=(x,y)$, for every $x\in X$ and $y\in Y$, and the fact that $\langle 1_X,0\rangle$ and $\langle 0,1_Y\rangle$ are $V$-functors, it follows that
\[a(0,x)\otimes b(0,y) \leq c((0,0),(x,0))\otimes c((0,0),(0,y))\leq c((0,0),(x,y)).\]
Moreover, $V$-functoriality of $\pi_2$ gives that, for every $y,y'\in Y$, $c((x,y),(x',y'))\leq b(y,y')$. When $y=y'$, $c((0,y),(x,y))=c((0,0),(x,0))=a(0,x)$, because $\langle 1_X,0\rangle$ is a kernel.
\end{proof}

\begin{prop} \label{semidir sym V-groups is product}
When $\otimes=\wedge$ and $(Y,b)$ is a symmetric $V$-category, the only possible compatible structure on $X\rtimes Y$ is $a\otimes b=a\wedge b$.
\end{prop}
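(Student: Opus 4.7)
The lower bound $a \wedge b \leq c$ is immediate from the preceding proposition, since $\otimes = \wedge$ gives $a \otimes b = a \wedge b$. For the reverse inequality the plan is to combine the upper bound $c \leq \lex$ with invariance of $c$ under shifting and with the symmetry of $(Y,b)$.

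First I would record two auxiliary identities, obtained by shifting together with $V$-functoriality of the morphisms in the split extension. Right-shifting the pair $((x,y),(x',y))$ by $(0,-y)$ reduces it to $((x,0),(x',0))$, so invariance and the fact that $\langle 1,0\rangle$ is a kernel give $c((x,y),(x',y)) = a(x,x')$. Left-shifting the pair $((x,y),(x,y'))$ by the inverse of $(x,y)$ reduces it to $((0,0),(0,y'-y))$, which together with $V$-functoriality of $\pi_2$ and of $\langle 0,1\rangle$ (a split, hence regular, monomorphism in $\VGrp$) gives $c((x,y),(x,y')) = b(y,y')$.

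With these in hand, the case $y=y'$ of the desired inequality $c \leq a \wedge b$ is immediate since $b(y,y)=\top$. For $y \neq y'$, transitivity through the intermediate point $(x',y)$ yields
\[ c((x,y),(x',y')) \wedge c((x',y'),(x',y)) \leq c((x,y),(x',y)) = a(x,x'), \]
and the second auxiliary identity combined with symmetry of $b$ tells us $c((x',y'),(x',y)) = b(y',y) = b(y,y')$. The bound $c \leq \lex$ now gives $c((x,y),(x',y')) \leq b(y,y')$ precisely because $y \neq y'$, so the left-hand meet collapses to $c((x,y),(x',y'))$ itself, yielding $c((x,y),(x',y')) \leq a(x,x')$; combining with the $\lex$-bound gives $c \leq a \wedge b$.

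The main obstacle is recognising how symmetry of $b$, by converting $b(y,y')$ into $b(y',y)$, turns the $\lex$ upper bound into exactly the factor produced by the transitivity inequality, so that it is absorbed by the $\wedge$. Without symmetry the same computation would only yield $c \otimes b(y',y) \leq a(x,x')$, which cannot be simplified further; and without $\otimes = \wedge$ one could not freely identify $a \otimes b$ with $a \wedge b$ in the lower bound.
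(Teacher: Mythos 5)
Your proof is correct and follows essentially the same route as the paper's: both reduce the upper bound $c \le a\wedge b$ to the facts that $c$ restricts to $a$ on the kernel and to $b$ on the image of the section, and then use symmetry of $b$ so that the $Y$-component can be cancelled at no cost. The only cosmetic difference is that the paper normalizes everything to pairs based at $(0,0)$ and applies $V$-functoriality of $+$ (namely $c((0,0),(x,y))\wedge c((0,0),(0,-y))\le c((0,0),(x,0))=a(0,x)$), whereas you keep general pairs and apply transitivity of $c$ after shifting --- two formulations that Proposition \ref{p:categ} shows to be interchangeable.
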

\begin{proof}
Let $x\in X$ and $y\in Y$. By $V$-functoriality of $\pi_2$, $c((0,0),(x,y))\leq b(0,y)$. Moreover, since $(x,0)=(x,y)+(0,-y)$, one gets \[c((0,0),(x,y))=c((0,0),(x,y))\wedge b(0,y)=c((0,0),(x,y))\wedge b(0,-y)= \]
\[ = c((0,0),(x,y)) \wedge c((0,0),(0, -y)) \leq c((0,0),(x,0)) =  a(0,x).\]
\end{proof}

\section{Symmetric $V$-groups} \label{Section sym Vgroups}

At the end of Section \ref{Section alg prop} we observed that the category $\VGrp_\sym$ of symmetric $V$-groups is protomodular, when $\otimes = \wedge$. One of the several equivalent ways of formulating this property is in terms of the so-called \emph{fibration of points}: given a finitely complete category $\C$, we denote by $\Pt(\C)$ the category of \emph{points} in \C, i.e. of split epimorphisms with a fixed section. The functor
\[ \cod \colon \Pt(\C) \to \C \]
which associates with every point its codomain is a fibration, called the \emph{fibration of points}. Several categorical-algebraic properties of a category \C\ can be expressed in terms of the change-of-base functors of this fibration. In particular, \C\ is protomodular if and only if, for every morphism $f \colon E \to B$ in \C, the change-of-base $f^* \colon \Pt_B(\C) \to \Pt_E(\C)$ is conservative \cite{Bourn protomod}. For pointed categories, this is equivalent to the validity of the Split Short Five Lemma. This means, in particular, that given a split extension
\begin{equation} \label{split ext Vgrp}
\xymatrix{(X,a)\ar[r]^-n& Z\ar@<-.5ex>[r]_-f&(Y,b)\ar@<-.5ex>[l]_-s}
\end{equation}
in \Grp, where $(X, a)$ and $(Y, b)$ are symmetric $V$-groups, there is at most one $V$-category structure $c$ on $Z$ that turns \eqref{split ext Vgrp} a split extension in $\VGrp_\sym$. As we observed in Proposition \ref{semidir sym V-groups is product}, this structure is always the product structure described in Theorem \ref{t:tensor}.

Moreover, the change-of-base functors of the fibration of points in $\VGrp_\sym$ are not only conservative, but actually monadic. Categories with this property are called \emph{categories with semidirect products}. The reason is that, in such categories, the points (i.e. the split extensions) correspond to suitable \emph{internal actions} (in the sense of \cite{BJK internal actions}). The fact that $\VGrp_\sym$ has semidirect products is a consequence of the following result (of which we recall here a particular case):

\begin{prop}[\cite{MetereMontoli}, Proposition $7$]
Let \C\ be a category with finite limits such that the category \Grp(\C) of internal groups in \C\ has pushouts of split monomorphisms. Then \Grp(\C) has semidirect products.
\end{prop}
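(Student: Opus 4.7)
The plan is to verify that, for each morphism $f \colon E \to B$ in $\Grp(\C)$, the change-of-base functor $f^* \colon \Pt_B(\Grp(\C)) \to \Pt_E(\Grp(\C))$ along the fibration of points is monadic. Recall that $f^*$ sends a point over $B$ to its pullback along $f$; it is well defined because $\C$, hence $\Grp(\C)$, is finitely complete, and $\Grp(\C)$ inherits a zero object from the terminal object of $\C$.

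First I would note that $\Grp(\C)$ is automatically protomodular: internal groups carry the internal Mal'tsev operation $(x,y,z) \mapsto x - y + z$, and the classical theory of Bourn then yields protomodularity, hence the validity of the Split Short Five Lemma. This last statement is exactly the conservativity of every change-of-base functor $f^*$.

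Next comes the key construction: I would exhibit a left adjoint $f_! \dashv f^*$. Given a point $(A, p, s)$ over $E$ in $\Grp(\C)$, where $s \colon E \to A$ is a split monomorphism, I form its pushout along $f \colon E \to B$ in $\Grp(\C)$, which exists by hypothesis:
\[ \xymatrix{ E \ar[r]^f \ar[d]_s & B \ar[d]^{\bar s} \\ A \ar[r]_{\bar f} & P. } \]
The pair $(fp, 1_B)$ satisfies $fp \circ s = f = 1_B \circ f$, so it induces a unique $q \colon P \to B$ with $q \bar s = 1_B$ and $q \bar f = fp$. The resulting point $(P, q, \bar s)$ over $B$ is the value of $f_!(A, p, s)$, and the universal property of the pushout immediately yields the adjunction $f_! \dashv f^*$.

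The main obstacle is to upgrade conservativity and the adjunction to full monadicity. For this I would invoke Beck's monadicity theorem, so the remaining task is to check that $f^*$ preserves the coequalizers of reflexive pairs whose image under $f^*$ is a split coequalizer. The fibres $\Pt_E(\Grp(\C))$ and $\Pt_B(\Grp(\C))$ inherit good exactness from $\Grp(\C)$: they are themselves finitely complete and protomodular. Pullback along $f$ in such a setting preserves split coequalizers of reflexive pairs by a Barr--Kock type argument exploiting protomodularity, and combining this with conservativity yields monadicity of $f^*$. By definition, this is exactly the statement that $\Grp(\C)$ has semidirect products.
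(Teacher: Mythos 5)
The paper itself states this proposition without proof, merely citing \cite{MetereMontoli}, so your argument has to stand on its own. Its first two steps are essentially sound. That $\Grp(\C)$ is protomodular for any finitely complete $\C$ is true and standard, although your justification is off: an internal Mal'tsev operation only makes a category (naturally) Mal'tsev, which is strictly weaker than protomodular; the correct argument goes through the Yoneda embedding of $\Grp(\C)$ into $[\C^{\op},\Grp]$, where the split short five lemma holds pointwise and is reflected along a conservative, finite-limit-preserving functor. Your pushout construction of the left adjoint $f_!\dashv f^*$ is exactly right, and it is the only place where the hypothesis on pushouts of split monomorphisms enters.

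The genuine gap is in the last step. A conservative right adjoint need not be monadic: Beck's theorem additionally requires $f^*$ to \emph{create} coequalizers of $f^*$-split pairs, and in particular such coequalizers must \emph{exist} in $\Pt_B(\Grp(\C))$. Nothing in the hypotheses --- finite limits plus pushouts of split monomorphisms --- provides any coequalizers, and ``finitely complete and protomodular'' does not supply them either. A Barr--Kock-type argument concerns reflection and pullback-stability of exactness properties, not the existence of colimits, so it cannot fill this hole; note also that split coequalizers are absolute and hence preserved by every functor, so the delicate point is not preservation but the existence of a coequalizer upstairs for a pair that only becomes split after applying $f^*$. The way this is actually closed for internal groups is to bypass Beck entirely: given an algebra for the monad induced by $f_!\dashv f^*$, one constructs the corresponding point explicitly, its underlying object being a finite limit (in the basic case $f\colon 0\to B$, the object $X\times B$ equipped with the twisted multiplication defined from the algebra structure map, internalizing the classical semidirect product of groups), and then uses protomodularity --- the split short five lemma --- to show that this assignment is quasi-inverse to the comparison functor. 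That construction requires only finite limits, which is precisely why the stated hypotheses suffice; your route through the monadicity theorem does not close without an additional, unavailable, colimit assumption.
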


Actually, $\VGrp_\sym$ satisfies a stronger categorical-algebraic condition: for any symmetric $V$-group $X$, the functor $\SplExt(-, X)$, associating with every symmetric $V$-group $Y$ the set of isomorphic classes of split extensions in $\VGrp_\sym$ with kernel $X$ and cokernel $Y$, is representable. Taking into account the equivalence between split extensions and internal actions mentioned before, categories with such a property are said to have \emph{representable actions}, or to be \emph{action representative} \cite{BJK action representative}. The fact that $\VGrp_\sym$ has representable actions is a consequence of the following result:

\begin{theorem}[\cite{BJK action representative}, Proposition $1.5$] \label{internal Gp in Cartesian closed cat is action representative}
If $\C$ is a finitely complete cartesian closed category, then the
category $\Grp(\C)$ of internal groups in $\C$ is action
representative.
\end{theorem}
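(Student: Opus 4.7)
The plan is to exhibit, for every internal group $G$ in a finitely complete cartesian closed category $\C$, an internal group $\Aut(G)$ in $\Grp(\C)$ that represents the functor $\SplExt(-,G)$. First I would use cartesian closedness to form the internal hom $[G,G]$; it carries a canonical internal monoid structure with composition as multiplication and $\mathrm{id}_G$ as unit. From this monoid I would carve out the subobject $\End_{\Grp(\C)}(G)\hookrightarrow[G,G]$ of those endomorphisms respecting the multiplication, as the equalizer expressing (after exponential transposition) the equality $f\circ m_G = m_G\circ(f\times f)$. Finally, $\Aut(G)\hookrightarrow \End_{\Grp(\C)}(G)$ is obtained by a further pullback along the composition morphism, imposing existence of a two-sided inverse under composition.

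Second, I would endow $\Aut(G)$ with an internal group structure: multiplication is inherited from composition in $[G,G]$, the unit is $\mathrm{id}_G$, and inversion is extracted from the invertibility pullback used to construct $\Aut(G)$ itself. Associativity and unitality descend from the monoid $[G,G]$, and the inverse axiom is encoded in the pullback.

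The third and main step is to set up a natural bijection
\[ \SplExt(H,G) \;\cong\; \mathrm{Hom}_{\Grp(\C)}(H,\Aut(G)) \]
for every $H\in\Grp(\C)$. An internal split extension with kernel $G$ and cokernel $H$ corresponds, via the internal semidirect product, to an internal action $\alpha\colon H\times G\to G$ in the sense of \cite{BJK internal actions}. Cartesian closedness produces its exponential transpose $\tilde\alpha\colon H\to [G,G]$, and the action axioms translate term by term into the two assertions that $\tilde\alpha$ factors through $\Aut(G)$ and is an internal group homomorphism. Conversely, any $\varphi\colon H\to\Aut(G)$ yields an action as the composite $H\times G\to\Aut(G)\times G\stackrel{\ev}{\to}G$. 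These assignments are mutually inverse, and naturality in $H$ is immediate from the universal property of the exponential, so $\Aut(G)$ represents $\SplExt(-,G)$.

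The principal obstacle is the careful internal construction of $\Aut(G)$ — defining inversion of automorphisms purely through universal properties rather than pointwise, and verifying the group axioms by diagram chasing in $\C$. Once this foundational step is in hand, the correspondence in the third step unwinds to formal manipulations of exponential transposes.
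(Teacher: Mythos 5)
Your proposal is correct and follows essentially the same route as the proof this paper relies on: the theorem is quoted from \cite{BJK action representative} without proof, and the detailed argument the authors point to in \cite{BorCleMon} is precisely what you sketch --- the internal object $\Aut(G)$ carved out of the exponential $[G,G]$ by finite limits, equipped with an internal group structure, and shown to represent $\SplExt(-,G)$ by transposing the conjugation action. The only step needing slightly more care than your wording suggests is the construction of the invertibles: ``imposing existence of a two-sided inverse'' is done by forming the finite limit of pairs $(f,g)$ with $f\circ g=\mathrm{id}_G=g\circ f$ and checking that its projection to $\End_{\Grp(\C)}(G)$ is monic, which simultaneously supplies the inversion morphism of $\Aut(G)$.
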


In fact, $\VGrp_\sym$ is the category of internal groups in the finitely complete, cartesian closed category $\VCat$. Following the detailed proof of Theorem \ref{internal Gp in Cartesian closed cat is action representative} (which can be found in \cite{BorCleMon}), we can conclude that the representing object of the functor $\SplExt(-, X)$ is the $V$-group $\Aut(X)$ of maps that are at the same time automorphisms of groups and of $V$-categories, with the $V$-category structure induced by the exponential in $\VCat$. \\

Coming back to the fibration of points, another strong property which holds in $\VGrp_\sym$ is that every change-of-base functor has a right adjoint. This is formalized by saying that $\VGrp_\sym$ is \emph{locally algebraically cartesian closed} (briefly: lacc) \cite{Gray}. Once again, this is a consequence of the fact that $\VGrp_\sym$ is the category of internal groups in $\VCat$:

\begin{prop}[\cite{Gray}, Proposition $5.3$]
Let \C\ be a cartesian closed category with pullbacks. The pullback
functor along any morphism in the category \Grp(\C) of internal groups has a right adjoint.
\end{prop}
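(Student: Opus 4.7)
The plan is to exploit the standard equivalence between points over $Y$ in $\Grp(\C)$ and internal $Y$-groups (i.e.\ internal groups equipped with an internal $Y$-action by group automorphisms), and then to build the right adjoint to the pullback functor as an ``internal coinduction'' functor using the internal hom provided by cartesian closedness of $\C$.

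First I would verify that, for any object $Y$ of $\Grp(\C)$, the fibre $\Pt_Y(\Grp(\C))$ of the fibration of points is equivalent to the category $\Grp(\C)^Y$ of pairs $(X, \alpha)$ with $X$ in $\Grp(\C)$ and $\alpha\colon Y \times X \to X$ an internal action by group automorphisms. This is the internal version of the semidirect product correspondence, available because $\Grp(\C)$ is finitely complete and protomodular: a point $(A, p, s)$ over $Y$ with kernel $n\colon X \to A$ yields the conjugation action $\alpha(y,x) = s(y) + n(x) - s(y)$, while $(X, \alpha)$ reconstructs the point $X \rtimes_\alpha Y \to Y$. Under this equivalence, the pullback functor $f^*$ along $f\colon E \to B$ in $\Grp(\C)$ corresponds to restricting the action along $f$, sending $(X, \beta)$ to $(X, \beta \circ (f \times \id_X))$.

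Next I would construct the right adjoint $f_*$ to this restriction functor. Given an $E$-group $(X, \alpha)$, form the internal hom $[B, X]$ in $\C$, which inherits a pointwise group structure because $[B, -]$, being a right adjoint, preserves the finite products involved in the internal group axioms. The group $B$ acts on $[B, X]$ by right translation $b \cdot \varphi = \varphi((-) + b)$, turning $[B, X]$ into a $B$-group. To incorporate the prescribed $E$-action on $X$, I would define $f_*(X, \alpha)$ to be the equalizer, taken in $\Grp(\C)$, of the two parallel group homomorphisms $[B, X] \rightrightarrows [E \times B, X]$ obtained by transposing the assignments $\varphi \mapsto \bigl((e, b) \mapsto \varphi(f(e) + b)\bigr)$ and $\varphi \mapsto \bigl((e, b) \mapsto \alpha(e, \varphi(b))\bigr)$. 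A short check shows that the $B$-action by right translation preserves this $E$-equivariance condition and so restricts to the equalizer, yielding a well-defined $B$-group.

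Finally, the adjunction $f^* \dashv f_*$ would follow from the universal properties of the internal hom and the equalizer: a morphism of $E$-groups $f^*(M, \beta) \to (X, \alpha)$ amounts to a group homomorphism $\bar{h}\colon M \to X$ intertwining the restricted $E$-action $\beta \circ (f \times \id_M)$ with $\alpha$; transposing along the internal hom adjunction produces a group homomorphism $M \to [B, X]$ which lands inside the equalizer precisely because of equivariance, and this transposed map is automatically $B$-equivariant thanks to compatibility of $\bar h$ with $\beta$. The main obstacle is carrying out all these verifications element-free, using only the universal properties of products, pullbacks, exponentials, and equalizers in $\C$, while at each step ensuring that the maps produced are group homomorphisms in the internal sense and that the equalizer is compatible with the restricted $B$-action. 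Once this is checked, $f^*$ has the desired right adjoint, proving the proposition and, applied to $\C = \VCat$, confirming that $\VGrp_\sym$ is locally algebraically cartesian closed.
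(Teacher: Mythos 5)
The paper offers no proof of this statement: it is quoted directly from Gray's article, so there is no internal argument to compare yours against; I can only judge your plan on its own terms, and on those terms it is correct --- it is the expected ``coinduction'' construction of the algebraic exponential. Two points deserve a little more care than you give them. First, the equivalence between $\Pt_Y(\Grp(\C))$ and internal $Y$-groups is not really a consequence of protomodularity: it comes from the explicit internal semidirect-product construction (the twisted multiplication on $X\times Y$ uses only finite products, and the comparison $X\rtimes_\alpha Y\to A$, $(x,y)\mapsto n(x)+s(y)$, is inverted diagrammatically), which is available in any finitely complete $\C$; protomodularity of $\Grp(\C)$ is then a by-product rather than an ingredient. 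Second, the crux --- and the only place where ``action by group automorphisms'' is genuinely used --- is that your two comparison morphisms $[B,X]\rightrightarrows[E\times B,X]$ are homomorphisms for the pointwise group structures: precomposition with $(e,b)\mapsto f(e)+b$ always is, while postcomposition with $\alpha(e,-)$ is a homomorphism precisely because each $\alpha(e,-)$ is an endomorphism of the group $X$; this is what makes your equalizer a subgroup object. Granting that, cartesian closedness plus pullbacks gives all finite limits, $\Grp(\C)\to\C$ creates them, right translation visibly restricts to the equalizer, and your transposition argument yields the natural bijection; every remaining verification can be carried out with generalized elements and so reduces to the set-level computations you sketch. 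I see no gap, only the bookkeeping you already acknowledge.
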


This implies, in particular, that $\VGrp_\sym$ is \emph{algebraically coherent} in the sense of \cite{CigoliGrayVdL}.

\section*{Acknowledgements}

The first author was partially supported by the Centre for Mathematics of the University of Coimbra -- UIDB/00324/2020, funded by the Portuguese Government through FCT/MCTES.

This work was partially developed during the first author's visit to the Universit\`{a} degli Studi di Milano, funded by a program for Visiting Professors.

\end{document}